\renewcommand{\tilde}{\widetilde}
\renewcommand{\hat}{\widehat}
\newcommand{\C}{\mathbb{C}}
\newcommand{\R}{\mathbb{R}}
\newcommand{\Z}{\mathbb{Z}}
\newcommand{\CP}{\mathbb{C}P}
\newcommand{\vlambda}{\boldsymbol{\lambda}}
\newcommand{\ve}{\mathbf{e}}
\newcommand{\vv}{\mathbf{v}}
\newcommand{\vw}{\mathbf{w}}
\newcommand{\vs}{\vspace}
\newcommand{\ds}{\displaystyle}
\newcommand{\B}{\mathcal{B}}
\newcommand{\T}{\mathbb{T}}
\DeclareMathOperator{\Cone}{Cone}
\renewcommand{\P}{\mathcal{P}}
\newcommand{\1}{\mathds{1}}
\def\mathscr{\mathscr}
\def\mcal{\mathcal}
\newtheorem{theorem}{Theorem}[section]
\newtheorem{lemma}[theorem]{Lemma}
\newtheorem{proposition}[theorem]{Proposition}
\newtheorem{corollary}[theorem]{Corollary}
\newtheorem{conjecture}[theorem]{Conjecture}
\newtheorem{question}[theorem]{Question}
\theoremstyle{definition}
\newtheorem{example}[theorem]{Example}
\newtheorem{definition}[theorem]{Definition}
\newtheorem{remark}[theorem]{Remark}
\numberwithin{equation}{section}
\renewcommand*\env@matrix[1][*\c@MaxMatrixCols c]{%
	\hskip -\arraycolsep
	\let\@ifnextchar\new@ifnextchar
	\array{#1}}
\newcommand{\bfa}[2]{{\mathbf{a}_{{#1},{#2}}}}
\newcommand{\aij}[3]{{a_{{#1},{#2}}^{{#3}}}}
\newcommand{\Lij}[3]{{\lambda_{{#1},{#2}}^{{#3}}}}
\DeclareMathOperator{\PC}{PC}
\begin{document}
\title{$c_1$-cohomological rigidity for smooth toric Fano varieties of Picard number two}

\dedicatory{Dedicated to Professor Victor M. Buchstaber on his 80th birthday}

\begin{abstract} 
The $c_1$-cohomological rigidity conjecture states that two smooth toric Fano varieties are isomorphic as varieties if there is a $c_1$-preserving isomorphism between their integral cohomology rings. In this paper, we confirm the conjecture for smooth toric Fano varieties of Picard number two. 
\end{abstract}

\author{Yunhyung Cho}
\address{Department of Mathematics Education, Sungkyunkwan University, Seoul, Republic of Korea}
\email{yunhyung@skku.edu}

\author{Eunjeong Lee}
\address{Center for Geometry and Physics, Institute for Basic Science (IBS), Pohang 37673, Korea}
\email{eunjeong.lee@ibs.re.kr}

\author{Mikiya Masuda}
\address{Osaka Central Advanced Mathematical Institute, Osaka Metropolitan University, Sugimoto, Sumiyoshi-ku, Osaka, 558-8585, Japan}
\email{masuda@osaka-cu.ac.jp}

\author{Seonjeong Park${}^\ast$}
\address{Department of Mathematics Education, Jeonju University, Jeonju 55069, Republic of Korea}
\email{seonjeongpark@jj.ac.kr}

\thanks{
${}^{\ast}$ S. Park is the corresponding author.}

\maketitle

\tableofcontents

\section{Introduction}

Motivated by McDuff's question mentioned later, we posed the following conjecture in \cite{CLMP}. 

\begin{conjecture}[{\cite[Conjecture 1.4]{CLMP}}] \label{conj_Fano_rigidity} 
Let $X$ and $Y$ be smooth toric Fano varieties. If there exists a $c_1$-preserving graded ring isomorphism between their integral cohomology rings, then $X$ and $Y$ are isomorphic as varieties, where $c_1$-preserving means preserving the first Chern classes of $X$ and~$Y$. 
\end{conjecture}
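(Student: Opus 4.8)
The plan is to turn a $c_1$-preserving graded ring isomorphism into an isomorphism of the defining fans, using Batyrev's description of smooth toric Fano varieties by their primitive collections and primitive relations, with the first Chern class controlling the degrees of those relations. Write $X=X_\Sigma$ and $Y=X_{\Sigma'}$ for the smooth Fano fans, with primitive ray generators $v_1,\dots,v_m$ and invariant prime divisors $D_1,\dots,D_m$, and likewise $v'_1,\dots,v'_{m'}$, $D'_1,\dots,D'_{m'}$ for $Y$. Recall the presentation
\[
H^*(X_\Sigma;\Z)\cong \Z[x_1,\dots,x_m]/(I_\Sigma+J_\Sigma),\qquad c_1(X_\Sigma)=x_1+\cdots+x_m,
\]
where $x_i=[D_i]$, the Stanley--Reisner ideal $I_\Sigma$ is generated by the square-free monomials over the primitive collections, and $J_\Sigma$ is generated by the linear forms $\sum_i\langle u,v_i\rangle x_i$, $u\in M$. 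A graded isomorphism $\phi\colon H^*(Y)\to H^*(X)$ with $\phi(c_1(Y))=c_1(X)$ restricts to a lattice isomorphism $H^2(Y)\to H^2(X)$ carrying $c_1$ to $c_1$; in particular $X$ and $Y$ have the same dimension $n$ (the top nonzero degree is $2n$) and the same Picard number $\rho=\operatorname{rank}H^2$, hence the same number of rays $m=n+\rho$. First I would record the invariants $\phi$ preserves automatically: the Betti numbers, hence the $f$-vector of $\Sigma'$ equals that of $\Sigma$, and the anticanonical degree $c_1^n[X]=c_1^n[Y]$.

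The second step is to recover the combinatorial skeleton of the fan from the pair $(H^*,c_1)$. The primitive collections are exactly the minimal non-faces, and these are intrinsic: a square-free monomial $x_{i_1}\cdots x_{i_k}$ lies in $I_\Sigma$ iff it vanishes, and minimality is ring-theoretic, so the Stanley--Reisner complex is determined by the graded ring together with its distinguished monomial structure. The essential leverage of $c_1$ is on the primitive relations: for a primitive collection $P$ with curve class $[C_P]$, Batyrev's degree $\deg(P)=\#P-\sum_\sigma c_\sigma$ equals the anticanonical pairing $c_1\cdot[C_P]$, and the Fano condition is precisely $\deg(P)>0$ for every $P$. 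Thus a $c_1$-preserving isomorphism, once it is known to match primitive collections, also matches their degrees; combined with the linear relations $J_\Sigma$, which encode the ray generators up to $GL_n(\Z)$, the primitive relations with their degrees determine the fan, so one recovers $\Sigma\cong\Sigma'$ and hence $X\cong Y$.

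The main obstacle is that a priori $\phi$ is only an abstract graded ring isomorphism and need not carry the distinguished basis $\{x_i\}$ of divisor classes to $\{x'_j\}$; plain cohomological rigidity already fails for exactly this reason. The heart of the proof is therefore to show that $c_1$-preservation, together with positivity, forces $\phi$ (after composing with an automorphism of $X$) to respect the divisor classes, equivalently the primitive collections. I would attack this by induction on the Picard number $\rho$: the case $\rho=1$ is $\CP^n$, determined by $(H^*,c_1)$ since $c_1=(n+1)x$ with $x$ the positive generator; the case $\rho=2$ is the content of this paper, where the two primitive collections and the Gale/Kleinschmidt data are few enough to control directly. For general $\rho$ I would use toric Mori theory: $c_1$ is ample, so it singles out the nef and Mori cones, whose extremal rays correspond to primitive relations of $c_1$-degree determined by $\phi$; contracting a suitable extremal ray expresses $X$ as a toric fibration or blow-down over a smooth toric Fano of smaller Picard number, and functoriality should propagate the isomorphism down the tower. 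The genuine difficulty—and the reason the full conjecture remains open beyond $\rho=2$—is the combinatorial interaction of many primitive collections: there is as yet no classification-free argument guaranteeing that the $c_1$-compatible matching given by one extremal contraction can be chosen coherently with all the others, so the inductive step would require either a uniform reconstruction theorem for primitive relations or a case analysis that is currently tractable only for small $\rho$.
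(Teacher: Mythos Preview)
The statement you are attempting to prove is Conjecture~\ref{conj_Fano_rigidity}, which the paper does \emph{not} prove: it is stated as an open conjecture, and the paper establishes only the special case of Picard number two (Theorem~\ref{thm_rank_2}). There is thus no proof in the paper to compare against, and you yourself concede in your final paragraph that the argument does not close beyond $\rho=2$.

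Even as an outline, the proposal has concrete gaps. The step ``recover the combinatorial skeleton of the fan from $(H^*,c_1)$'' identifies primitive collections as minimal vanishing square-free monomials in the distinguished generators $x_i=[D_i]$, but an abstract graded ring isomorphism gives no access to those generators; you acknowledge this, so the step is not actually carried out. The proposed remedy via toric Mori theory is also ungrounded: knowing the single ample class $c_1$ does not by itself determine the nef cone or the Mori cone inside $H^2$ or $H_2$, since those cones record which classes are effective---precisely the geometric data one is trying to reconstruct from cohomology. The inductive scheme (contract an extremal ray and descend) further requires that the contraction be visible from $(H^*,c_1)$ alone, that the target remain smooth toric Fano, and that a compatible $c_1$-preserving isomorphism be induced on it; none of this is established. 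What you have written is a reasonable heuristic for why the conjecture might be true, not a proof, and the paper makes no stronger claim for $\rho\ge 3$.
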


Neither the Fano condition nor the $c_1$-preserving condition can be dropped as is observed for Hirzebruch surfaces. We say that a smooth toric Fano variety $X$ is {\em $c_1$-cohomologically rigid} if any smooth toric Fano variety $Y$ which allows a $c_1$-preserving graded ring isomorphism $H^*(Y;\Z)\to H^*(X;\Z)$ is isomorphic to $X$ as a variety. Then Conjecture \ref{conj_Fano_rigidity} is equivalent to saying that every smooth toric Fano variety is $c_1$-cohomologically rigid. 

The $c_1$-cohomological rigidity is verified for Fano Bott manifolds (\cite{CLMP}), smooth toric Fano varieties of dimension up to four or of Picard number greater than or equal to $2n-2$ (\cite{HKM22}), where $n$ is the complex dimension of the Fano variety. In each dimension $n$, the complex projective space $\C P^n$ is the only smooth compact toric variety of Picard number one, which is Fano. However, there are many smooth toric Fano varieties of Picard number two. In this paper, we prove the following. 

\begin{theorem} \label{thm_rank_2}
Every smooth toric Fano variety of Picard number two is $c_1$-cohomologically rigid. 
\end{theorem}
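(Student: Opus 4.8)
My plan is to exploit the complete classification of smooth toric Fano varieties of Picard number two, which is classical: by Kleinschmidt's description, every smooth projective toric variety of Picard number two is a projectivization $\mathbb{P}(\mathcal{O} \oplus \mathcal{O}(a_1) \oplus \cdots \oplus \mathcal{O}(a_k))$ of a split bundle over $\mathbb{C}P^m$, and the Fano condition cuts this down to an explicit finite list of parameter families. So the first step is to write down this classification, normalize the defining data $(m, k; a_1, \dots, a_k)$ up to the obvious redundancies (reordering the $a_i$, and the twist $\mathcal{O}(a_i) \mapsto \mathcal{O}(a_i - c)$), and record for each such variety $X$ its cohomology ring $H^*(X;\mathbb{Z}) = \mathbb{Z}[x,y]/I$ together with the first Chern class $c_1(X)$ expressed in the standard generators. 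This reduces Theorem~\ref{thm_rank_2} to a purely algebraic statement: if two varieties from the list have $c_1$-preserving isomorphic cohomology rings, then their normalized data coincide (hence they are isomorphic as varieties, since the data determines $X$).

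**The algebraic core.** Next I would analyze graded ring isomorphisms $\varphi\colon H^*(Y;\mathbb{Z}) \to H^*(X;\mathbb{Z})$. In degree two both sides are free of rank two, so $\varphi$ is given by an integer matrix in $GL_2(\mathbb{Z})$ acting on the chosen generators; the constraint is that it must carry the ideal of relations of $Y$ to that of $X$, and additionally send $c_1(Y)$ to $c_1(X)$. The relations are generated by one element in degree $m+1$-ish coming from the $\mathbb{C}P^m$ fiberwise structure (a pure power, up to lower-order corrections governed by the $a_i$) and one element of higher degree coming from the section-at-infinity. The pure-power relation is rigid: it forces $\varphi$ to be (up to sign and the residual reordering freedom) either upper- or lower-triangular in an appropriate basis, which already pins down $m$ and $k$ and most of the combinatorics. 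Then the $c_1$-preserving condition, which is linear in the entries, together with the second relation, is used to eliminate the remaining discrete ambiguity and force equality of the $a_i$-data. I would organize this as a case analysis on the shape of the two relations, likely splitting on whether $m = k$, whether some $a_i$ vanish, and the small sporadic Fano cases where the generic pattern degenerates.

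**Where the difficulty lies.** The main obstacle is not the existence of the isomorphism argument in the generic case — that is a finite linear-algebra-plus-divisibility computation — but rather controlling the boundary cases where the cohomology ring presentation degenerates: small values of $m$ or $k$, varieties where several $a_i$ coincide or vanish (so the "rigid pure power" is actually a product of two powers and $\varphi$ has more freedom), and coincidences among the finitely many Fano parameter choices where two a priori different $(m,k;a_i)$ could conceivably yield abstractly isomorphic rings. In those cases the degree-two relation alone does not determine $\varphi$, and I expect the $c_1$-preserving hypothesis to be genuinely necessary rather than cosmetic — so the delicate part is verifying, case by case over the explicit Fano list, that once $c_1$ is also preserved no such spurious coincidence survives. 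A secondary technical point is bookkeeping: ensuring the normalization of the defining data is tight enough that "same data" really does imply "isomorphic variety," which I would handle by citing Kleinschmidt's classification and the standard fact that these bundles are determined up to twist and permutation.

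**Assembly.** Finally I would combine the pieces: given a $c_1$-preserving isomorphism $H^*(Y;\mathbb{Z}) \to H^*(X;\mathbb{Z})$ with $X$ Fano of Picard number two, conclude first that $Y$ has Picard number two and is smooth toric (this is automatic from the ring structure), then that $Y$ is Fano — here one uses that the Fano condition on Picard-number-two toric varieties can be read off from the cohomology ring together with $c_1$, or alternatively one observes the conjecture's hypothesis already assumes $Y$ Fano — and then apply the algebraic core to deduce that the normalized data of $X$ and $Y$ agree, whence $X \cong Y$ as varieties. This establishes that every such $X$ is $c_1$-cohomologically rigid, which is the assertion of Theorem~\ref{thm_rank_2}.
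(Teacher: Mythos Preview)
Your proposal is correct and follows essentially the same route as the paper. The paper likewise invokes Kleinschmidt to reduce to two-stage generalized Bott manifolds $\B(n_1,(a_1,\dots,a_{n_2}))$, normalizes so that $a_k\ge 0$, records $H^*(\B;\Z)=\Z[x_1,x_2]/\langle x_1^{n_1+1},\,x_2\prod_k(x_2-a_kx_1)\rangle$ and $c_1(\B)$ in these generators, then analyzes a graded isomorphism $\varphi$ exactly as you outline: the vanishing $\varphi(\tilde x_1)^{n_1+1}=0$ forces $\varphi$ to be triangular (this is the cited Lemma of Choi--Masuda--Suh, valid for $n_2\ge 2$ and nonzero $(a_k)$), after which the $c_1$-preserving condition pins the signs to $+1$ and yields a linear relation $(n_2+1)p=\sum\tilde a_k-\sum a_k$; the case split is then $n_2<n_1$ versus $n_2\ge n_1$, with the Fano bound $\sum a_k\le n_1$ used in the latter to force $p=0$ and to guarantee that the first $n_1$ elementary symmetric functions already determine the multiset $\{a_k\}$. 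Two small deviations from your sketch: the case split the paper actually needs is on $n_2$ versus $n_1$ rather than on $m=k$ or on vanishing $a_i$, and the degenerate situations (all $a_k=0$, or $n_2=1$) are dispatched beforehand by short separate lemmas rather than absorbed into the main computation. Also, a terminological slip: the pure power $x_1^{n_1+1}$ comes from the \emph{base} $\C P^{n_1}$, not the fiber, though this does not affect your argument.
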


In fact, a smooth compact toric variety $X$ of Picard number two is a two-stage generalized Bott manifold, that is, the total space of a $\C P^{n_2}$-bundle over $\C P^{n_1}$ obtained as 
\[
X=P(\underline{\C}\oplus\gamma^{a_1}\oplus\cdots \oplus \gamma^{a_{n_2}})
\]
where $\underline{\C}$ is the trivial line bundle over $\C P^{n_1}$, $\gamma$ is the tautological line bundle over $\C P^{n_1}$, $a_1,\dots,a_{n_2}$ are integers, and $P(\ )$ denotes projectivization. We may assume that $a_1,\dots,a_{n_2}$ are nonnegative without loss of generality and then $X$ is Fano if and only if $\sum_{k=1}^{n_2}a_k\le n_1$. 

Conjecture \ref{conj_Fano_rigidity} is closely related to an algebraic property of the group of 
Hamiltonian diffeomorphisms of a monotone (symplectic) toric manifold. In \cite{McDuff2011}, McDuff raised the following question.

\begin{question}[{McDuff, \cite[Question~1.11]{McDuff2011}}] \label{Q_1}
Is there a monotone toric manifold $(M,\omega)$ with more than one toric structure?
\end{question}

Here, a symplectic form $\omega$ is called {\em monotone} if $c_1(M) := c_1(TM,J) = \lambda \cdot [\omega]$ for some $\lambda > 0$ and an $\omega$-compatible almost complex structure $J$ on $M$. McDuff's question asks whether compact $n$-tori in the group $\mathrm{Ham}(M,\omega)$ of all Hamiltonian diffeomorphisms of a monotone toric manifold $(M,\omega)$ are conjugate to each other in $\mathrm{Ham}(M,\omega)$, where $n=\frac{1}{2}\dim_{\R} M$. We may take $\lambda = 1$ because $\mathrm{Ham}(M,\omega)$ is independent of $\lambda$. 

A monotone toric manifold $(M,\omega)$ (with $\lambda=1$) is known to be equivariantly symplectomorphic to a smooth toric Fano variety $X$, where the symplectic form on $X$ is a first Chern form of $X$. Indeed, this correspondence is one-to-one (up to isomorphism) because monotone toric manifolds bijectively correspond to the dual of smooth Fano polytopes through moment maps while smooth toric Fano varieties bijectively correspond to smooth Fano polytopes through fans. 

The following proposition shows that an affirmative solution of Conjecture \ref{conj_Fano_rigidity} answers Question~\ref{Q_1}. 

\begin{proposition} \label{prop_rigidity_implies_uniqueness}
If the smooth toric Fano variety $X$ corresponding to a monotone toric manifold $(M,\omega)$ is $c_1$-cohomologically rigid, then $(M,\omega)$ has a unique toric structure. 
\end{proposition}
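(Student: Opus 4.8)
The plan is to play the stated bijection between monotone toric manifolds and smooth toric Fano varieties against the $c_1$-cohomological rigidity of $X$. Suppose $(M,\omega)$ carries another toric structure besides the one producing $X$. By that bijection, this structure is realized, up to an equivariant symplectomorphism, by a smooth toric Fano variety $Y$ together with the symplectic form $\omega_Y$ underlying a first Chern form of $Y$; write $\omega_X$ for the first Chern form on $X$ in the analogous identification. Since both $(X,\omega_X)$ and $(Y,\omega_Y)$ are equivariantly symplectomorphic to $(M,\omega)$, there is a symplectomorphism $\psi\colon (X,\omega_X)\to(Y,\omega_Y)$, which a priori respects neither the complex structures nor the torus actions.

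First I would verify that $\psi^{*}\colon H^{*}(Y;\Z)\to H^{*}(X;\Z)$ is a $c_1$-preserving graded ring isomorphism. It is a graded ring isomorphism because $\psi$ is a diffeomorphism, and $\psi^{*}[\omega_Y]=[\omega_X]$ in de Rham cohomology because $\psi$ is a symplectomorphism. As $\omega_X$ and $\omega_Y$ are first Chern forms, $[\omega_X]$ and $[\omega_Y]$ are the images of $c_1(X)\in H^2(X;\Z)$ and $c_1(Y)\in H^2(Y;\Z)$ under the maps to real coefficients, and these maps are injective because smooth compact toric varieties have torsion-free cohomology. Hence $\psi^{*}c_1(Y)$ and $c_1(X)$ have the same image in $H^2(X;\R)$, so they already coincide in $H^2(X;\Z)$, i.e.\ $\psi^{*}$ is $c_1$-preserving.

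Consequently $Y$ is a smooth toric Fano variety admitting a $c_1$-preserving graded ring isomorphism $H^{*}(Y;\Z)\to H^{*}(X;\Z)$, and the hypothesis that $X$ is $c_1$-cohomologically rigid yields $Y\cong X$ as varieties. It then remains to translate this isomorphism of varieties back into a statement comparing the two toric structures on $(M,\omega)$. An isomorphism of toric varieties $f\colon X\to Y$ is equivariant with respect to an isomorphism of the acting tori, and $f^{*}\omega_Y$ is a $T^n$-invariant symplectic form on $X$ cohomologous to $\omega_X$ (indeed $f^{*}[\omega_Y]=f^{*}c_1(Y)=c_1(X)=[\omega_X]$, since $f$ is biholomorphic). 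By the equivariant Moser trick --- equivalently, by the uniqueness part of the Delzant correspondence --- there is a $T^n$-equivariant diffeomorphism $h$ of $X$ with $h^{*}f^{*}\omega_Y=\omega_X$, so $f\circ h\colon(X,\omega_X)\to(Y,\omega_Y)$ is a $T^n$-equivariant symplectomorphism. Transporting $f\circ h$ through the two equivariant identifications with $(M,\omega)$ exhibits the two Hamiltonian $T^n$-actions on $(M,\omega)$ as conjugate, up to an automorphism of $T^n$, by a symplectomorphism of $(M,\omega)$; since $M$ is simply connected one has $\mathrm{Symp}_0(M,\omega)=\mathrm{Ham}(M,\omega)$, and arranging the conjugating symplectomorphism to lie in the identity component upgrades this to conjugacy inside $\mathrm{Ham}(M,\omega)$. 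Thus $(M,\omega)$ has a unique toric structure.

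I expect the delicate point to be this last step --- converting ``$X\cong Y$ as varieties'' into ``the underlying Hamiltonian $n$-tori are conjugate in $\mathrm{Ham}(M,\omega)$'' --- rather than the cohomological argument, which is short and essentially forced by the definitions: one must replace the algebraic isomorphism by an equivariant symplectomorphism of the monotone models (equivariant Moser) and then control which component of $\mathrm{Symp}(M,\omega)$ the resulting conjugating map lies in. The remaining ingredients --- torsion-freeness of toric cohomology, monotonicity to identify $[\omega]$ with $c_1$, and the Fano $\leftrightarrow$ monotone dictionary --- are all recalled in the excerpt.
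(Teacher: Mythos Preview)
Your argument is correct and follows the same overall strategy as the paper: produce a $c_1$-preserving cohomology isomorphism, invoke rigidity to get a variety isomorphism, upgrade that to an equivariant one, and then to an equivariant symplectomorphism of the monotone models.

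The paper's proof is shorter because it never leaves $M$. Rather than passing through the bijection to two separate Fano varieties $X$ and $Y$ linked by a symplectomorphism $\psi$, it regards the two toric structures $(M,\omega,T_1)$ and $(M,\omega,T_2)$ themselves as smooth toric Fano varieties and takes the \emph{identity} on $H^*(M;\Z)$ as the required isomorphism; this is automatically $c_1$-preserving since $c_1=[\omega]$ depends only on $(M,\omega)$, not on the chosen torus. That bypasses your torsion-freeness step and the back-and-forth transport. For the final conversion, the paper argues via fans and moment polytopes and invokes Delzant's theorem directly to produce the equivariant symplectomorphism $f\colon (M,\omega,T_1)\to(M,\omega,T_2)$, whereas you use equivariant Moser on the pullback form; the two routes are equivalent in content. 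On the point you flag as delicate---why the conjugating map lies in $\mathrm{Ham}(M,\omega)$---the paper simply asserts it, so your treatment is if anything more candid about where the subtlety sits.
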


\begin{proof}
Let $T_1$ and $T_2$ be two $n$-tori in $\mathrm{Ham}(M,\omega)$ where $n=\frac{1}{2}\dim_{\R} M$. Then, monotone toric manifolds $(M,\omega,T_1)$ and $(M,\omega,T_2)$ can be regarded as smooth toric Fano varieties and there is an obvious $c_1$-preserving cohomology ring isomorphism between them, that is the identity. Therefore, $(M,\omega,T_1)$ and $(M,\omega,T_2)$ are isomorphic as varieties since $X$ is $c_1$-cohomologically rigid. Then they are isomorphic as toric varieties up to an automorphism of the algebraic torus acting on them\footnote{In general, if two smooth compact toric varieties $X_1, X_2$ are isomorphic as varieties, then there exists an isomorphism $f\colon X_1\to X_2$ together with an automorphism of the algebraic torus $\T$ acting on $X_1$ and $X_2$ such that $f(gx)=\sigma(g)f(x)$ for $g\in \T$ and $x\in X$, see Section~\ref{subsection_equivariant_cohomologies} in this paper for details.}. 
This implies that there is a $\Z$-linear isomorphism between the fans, and hence between the moment polytopes for $(M,\omega,T_1)$ and $(M,\omega,T_2)$ as we fixed $\lambda = 1$.
Therefore, by Delzant's theorem~\cite{Del}, there is a symplectomorphism $f\colon (M,\omega,T_1)\to (M,\omega,T_2)$ together with an isomorphism $\sigma\colon T_1\to T_2$ such that $f(gp)=\sigma(g)f(p)$ for $g\in T_1$ and $p\in M$. This means that $f$ is in $\mathrm{Ham}(M,\omega)$ and $T_1$ is conjugate to $T_2$ in $\mathrm{Ham}(M,\omega)$ by $f$. 
\end{proof}

Thus, we obtain the following as a corollary of Theorem~\ref{thm_rank_2}.

\begin{corollary}[{\cite[Corollary 1.15]{fano14}}]
The monotone toric manifold associated with a smooth toric Fano variety of Picard number two has a unique toric structure. 
\end{corollary}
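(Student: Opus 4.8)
The plan is to deduce the corollary by feeding Theorem~\ref{thm_rank_2} directly into Proposition~\ref{prop_rigidity_implies_uniqueness}; no new geometry is required beyond the bijective dictionary already recalled in the introduction. In other words, the corollary is the formal composite ``Picard number two $\Rightarrow$ $c_1$-cohomologically rigid $\Rightarrow$ unique toric structure.''

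First I would fix a smooth toric Fano variety $X$ of Picard number two and let $(M,\omega)$ be the monotone toric manifold associated with it under the one-to-one correspondence between monotone toric manifolds and smooth toric Fano varieties, the symplectic form being a first Chern form of $X$. Next, Theorem~\ref{thm_rank_2} asserts that every such $X$ is $c_1$-cohomologically rigid. Finally, Proposition~\ref{prop_rigidity_implies_uniqueness} takes exactly this hypothesis---that the Fano variety $X$ corresponding to $(M,\omega)$ is $c_1$-cohomologically rigid---and concludes that $(M,\omega)$ has a unique toric structure, i.e.\ that any two compact $n$-tori in $\mathrm{Ham}(M,\omega)$ are conjugate. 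Composing these three statements yields the corollary.

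The only point meriting a word of care is that the Picard-number-two hypothesis is transported correctly across the correspondence, so that the variety $X$ attached to $(M,\omega)$ genuinely has Picard number two and Theorem~\ref{thm_rank_2} applies to it. This is automatic: the correspondence is an isomorphism-respecting bijection realized through smooth Fano polytopes and fans, and the Picard number is read off from this same combinatorial datum on both the symplectic and the algebraic side. Consequently there is no substantive obstacle at the level of the corollary itself; all of the difficulty is absorbed into Theorem~\ref{thm_rank_2}, which supplies the rigidity input, and into Proposition~\ref{prop_rigidity_implies_uniqueness}, whose proof has already converted rigidity into the conjugacy of tori via Delzant's theorem.
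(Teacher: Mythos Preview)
Your proposal is correct and matches the paper's approach exactly: the paper simply states that the corollary follows from Theorem~\ref{thm_rank_2} (via Proposition~\ref{prop_rigidity_implies_uniqueness}) without giving a separate proof, and your write-up just makes this two-step implication explicit.
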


This paper is organized as follows. In Section~\ref{sec:generalized_Bott}, we briefly review generalized Bott manifolds and the presentation of their cohomology ring in terms of so-called {generalized Bott matrices}. In Section~\ref{secFanoGeneralizedBottManifolds}, we investigate when generalized Bott manifolds are Fano by applying Batyrev's criterion. In Section~\ref{sec:two-stage}, we prove Theorem~\ref{thm_rank_2}. In Section~\ref{sec:concluding_remarks}, we discuss related cohomological rigidity problems and results. 

\subsection*{Acknowledgements}
Y. Cho was supported by the National Research Foundation of Korea(NRF) grant funded by the Korea government(MSIP; Ministry of Science, ICT \& Future Planning) (No.\ 2020R1C1C1A01010972) and (No.\ 2020R1A5A1016126). E. Lee was supported by the National Research Foundation of Korea(NRF) grant funded by the Korea government(MSIT) (No.\ RS-2023-00239947).
M. Masuda was supported in part by JSPS Grant-in-Aid for Scientific Research 22K03292.  
S. Park was supported by the National Research Foundation of Korea [NRF-2020R1A2C1A01011045].
This work was partly supported by Osaka Central Advanced Mathematical Institute (MEXT Joint Usage/Research Center on Mathematics and Theoretical Physics) and the HSE University Basic Research Program.

\section{Generalized Bott manifolds}
\label{sec:generalized_Bott}

In this section, we recall some basic facts on generalized Bott manifolds from~\cite{CMS-tams}.
\begin{definition}[\cite{CMS-tams}]
A \emph{generalized Bott tower} $\B_\bullet$ of height $m$ is an iterated $\C P^{n_{i}}$-bundle:
\begin{equation*}
\begin{tikzcd}[row sep = 0.2cm]
\mathcal{B}_{m} \arrow[r, "\pi_{m}"] \arrow[d, equal]& 
\mathcal{B}_{m-1} \arrow[r, "\pi_{m-1}"] &
\cdots \arrow[r, "\pi_2"] &
\mathcal{B}_1 \arrow[r, "\pi_1"] \arrow[d, equal]&
\mathcal{B}_0, \arrow[d, equal]\\
P(\underline{\C}\oplus \xi_{m-1}^{1}\oplus \cdots \oplus \xi_{m-1}^{n_{m}}) & & & \C P^{n_{1}} & \{\text{a point}\}
\end{tikzcd} 
\end{equation*}
where each $\mathcal{B}_{i}$ is the complex projectivization of the Whitney sum of line bundles~$\xi_{i-1}^{k}$ $(1\leq k\leq n_{i})$ and the trivial line bundle $\underline{\C}$ over $\mathcal{B}_{i-1}$.  
We call $\B_m$ an $m$-stage {\em generalized Bott manifold}. When $n_i=1$ for every $i$, a generalized Bott tower is called a \emph{Bott tower}, and accordingly a generalized Bott manifold is called a \emph{Bott manifold}. 
\end{definition}
The fiber of the bundle $\pi_i\colon \B_{i}\to \B_{i-1}$ is the complex projective space $\C P^{n_i}$ and if the line bundles $\xi_{i-1}^{k}$ constructing the tower $\B_\bullet$ are all trivial, then $\B_m$ is isomorphic to $\prod_{i=1}^m \C P^{n_i}$ as a variety. 

Let $\gamma_j$ be the tautological line bundle over $\mathcal{B}_j$. By abuse of notation, we denote the pullback of~$\gamma_j$ by the projection $\pi_i\circ \cdots \circ\pi_{j+1}\colon \mathcal{B}_{i}\to \mathcal{B}_j$ for $i>j$ by the same notation $\gamma_j$. Then the Picard group $\mathrm{Pic}(\B_{i})$ of $\B_{i}$ is generated by the line bundles $\gamma_{j}$ for $1 \leq j \leq i$ and isomorphic to $\Z^{i}$. Thus each line bundle $\xi_{i-1}^k$ (where $1 \leq k \leq n_i$) over $\B_{i-1}$ can be expressed by 
\begin{equation}\label{eq_xi_and_aij}
\xi_{i-1}^{k}=\bigotimes_{1\leq j < i}\gamma_{j}^{\otimes a_{i,j}^{k}}
\end{equation}
for some integers $a_{i,j}^{k} \in \Z$ with $1 \leq j < i$. Accordingly, the set $\{ a_{i,j}^{k} \}_{\substack{1 \leq j < i \leq m, \\ 1\leq k\leq n_{i}}}$ of integers determines a generalized Bott manifold.

The projection map $\pi_{i} \colon \B_{i} \rightarrow \B_{i-1}$ admits a section induced from the zero section of the vector bundle $\underline{\C} \oplus \bigoplus_{k=1}^{n_{i}} \xi_{i-1}^{k}$ so that 
the induced ring homomorphism 
\[
\pi_i^* \colon H^*(\B_{i-1};\Z) \rightarrow H^*(\B_i;\Z)
\]
is injective and we think of elements in $H^*(\B_j;\Z)$ as elements in $H^*(\B_m)$ for any $j\le m$. We set 
\[
x_j:=-c_1(\gamma_j)\in H^2(\B_m;\Z).
\]
Then it follows from \eqref{eq_xi_and_aij} that 
\[
c_1(\xi_{i-1}^{k})=-\sum_{j=1}^{i-1}a_{i,j}^{k}x_j\in H^2(\B_{m};\Z).
\]

A generalized Bott manifold $\B_m$ is a smooth projective toric variety of $\C$-dimension $n \coloneqq \sum_{i=1}^m n_i$, where the algebraic torus action can be constructed in an iterative way using a toric structure of a base space and a $(\C^*)^{n_i}$-action on a fiber at each stage. The associated fan $\Sigma$ can be described as follows. (See also ~\cite[\S 7.3]{CLStoric11}.) Let $\{ \ve^1_1,\dots,\ve^{n_1}_1,\dots,\ve^1_m,\dots,\ve^{n_m}_m\}$ be the standard basis vectors of $\R^n = \R^{n_1} \oplus \cdots \oplus \R^{n_m}$. Then the set of ray generators of $\Sigma$ is given by the columns of the matrix
\begin{equation}\label{eq_Bott_matrix}
(E~|~A)\coloneqq\begin{pmatrix}
E_{n_{1}}& & & & & -\1 & & & & \\
& E_{n_{2}}& & & & \mathbf{a}_{2,1} & -\1 & & & \\
& &E_{n_{3}}& & & \mathbf{a}_{3,1} & \mathbf{a}_{3,2} & -\1 & & \\
& & & \ddots & &\vdots & \vdots & \ddots &\ddots & \\
& & & &E_{n_{m}}& \mathbf{a}_{m,1} & \mathbf{a}_{m,2} &\cdots & \mathbf{a}_{m,m-1} & -\1\\
\end{pmatrix},
\end{equation}
where $E_{n_j}$ is the identity matrix of size $n_j$, i.e., the column vectors of $E_{n_j}$ are $\ve^{1}_j,\dots,\ve^{n_j}_j$, and 
\[
\bfa{i}{j} \coloneqq (\aij{i}{j}{1},\dots,\aij{i}{j}{n_i})^T \in \Z^{n_i} \qquad\text{ and }\qquad {-\1} = (-1,\dots,-1)^T
\] for $j=1,\dots,m$. For simplicity, we denote the $(n+j)$th column vector of~\eqref{eq_Bott_matrix} by $\vv_j$ for $j=1,\dots,m$. Then it
is easy to see that $\P$ is a maximal cone in $\Sigma$ if and only if $\P$ is of the form
\begin{equation*}
\P = \Cone(\hat{\P}_1 \cup \dots \cup \hat{\P}_m),
\end{equation*}
where 
\begin{equation*}
R_j = \{ \ve_{j}^{1},\dots,\ve_{j}^{n_{j}},\vv_{j}\},\quad \hat{\P}_j = R_j \backslash\{ \vw_j \} \quad \text{ for some } \vw_j \in R_j, j=1,\dots,m.
\end{equation*} 
In particular, $\Sigma$ is combinatorially equivalent to the product fan $\Sigma_1 \times \cdots \times \Sigma_m$, where $\Sigma_i$ is the fan of $\C P^{n_i}$ and so there are $\prod_{j=1}^m (n_j+1)$ maximal cones. We call the matrix $A=[\vv_1 \dots \vv_m]$ a {\em generalized Bott matrix} of type $(n_1,\dots,n_m)$. 

\begin{example} 
A generalized Bott matrix $A$ of type $(1,4,2)$ has the following form:
\[
A = \begin{bmatrix}
- \1 & \mathbf{0} & \mathbf{0} \\ 
\bfa{2}{1} & - \1 & \mathbf{0}\\
\bfa{3}{1} & \bfa{3}{2} & -\1 
\end{bmatrix} = 
\begin{bmatrix}
-1 & 0 & 0 \\
\aij{2}{1}{1} & -1 & 0 \\
\aij{2}{1}{2} & -1 & 0 \\
\aij{2}{1}{3} & -1 & 0 \\
\aij{2}{1}{4} & -1 & 0\\
\aij{3}{1}{1} & \aij{3}{2}{1} & -1 \\
\aij{3}{1}{2} & \aij{3}{2}{2} & -1
\end{bmatrix}.
\]
\end{example} 

We set
\[
\alpha_{i}^{k}\coloneqq a_{i,1}^{k}x_{1}+\dots+a_{i,i-1}^{k}x_{i-1}\in H^{2}(\B_m;\Z) \qquad (i=2,\dots,m,\quad k=1,\dots,n_{i}).
\]
By the Borel--Hirzebruch formula, the integral cohomology ring of $\B_m$ can be represented by 
\begin{equation}\label{eq_coh_GB}
\begin{split}
H^\ast(\B_m;\Z)&= \Z[x_1,\dots,x_m] \Bigg/ \left\langle x_1^{n_1+1},\ x_i\prod_{k=1}^{n_{i}}(x_{i}-\alpha_{i}^{k}) \ \ (i=2,\dots,m) \right\rangle,
\end{split}
\end{equation}
where $\langle\ \rangle$ denotes the ideal generated by the elements in it. The total Chern class of $\B_m$ is written by
\[
\begin{array}{ccl}\vs{0.2cm}
c(\B_m) & = & \ds (1+x_1)^{n_1+1}\prod_{i=2}^{m}\left[(1+x_{i})\prod_{k=1}^{n_{i}}(1+x_{i}-\alpha_{i}^{k})\right]
\end{array}
\]
and in particular, we have 
\begin{equation} \label{eq:c1}
\begin{array}{ccl}
c_1(\B_m) & = &\ds (n_1+1)x_1+\sum_{i=2}^{m}\left\{(n_{i}+1)x_i-\sum_{k=1}^{n_{i}}\alpha_i^k\right\}\\
&= &\ds \sum_{i=1}^{m}(n_{i}+1)x_i-\sum_{i=2}^m\sum_{k=1}^{n_{i}}\alpha_i^k.
\end{array}
\end{equation}

\begin{remark}\label{rm1} 
\begin{enumerate}
\item We obtain $H^*(\B_m;\Z)\cong H^*(\prod_{i=1}^m\C P^{n_i};\Z)$ as \emph{groups}, so $H^*(\B_m)$ determines the multiset $\{n_1,\dots,n_m\}$ of fiber dimensions in the generalized Bott tower $\B_\bullet$. 
\item A smooth compact toric variety whose cohomology ring is isomorphic to the cohomology ring of a generalized Bott manifold is a generalized Bott manifold. (See \cite{CMS-tams} or \cite{CPS-lms}.)
\end{enumerate}
\end{remark}

\section{Fano condition}
\label{secFanoGeneralizedBottManifolds}

In this section, we describe the Fano condition on generalized Bott manifolds using Batyrev's criterion in ~\cite{Batyrev91, Batyrev99}. We refer the reader to~\cite{Suyama20} for another description of the Fano condition on generalized Bott manifolds. 

For a complete nonsingular fan $\Sigma$, a subset $R$ of the primitive ray vectors is called a \emph{primitive collection} of~$\Sigma$ if 
\[
\Cone(R) \notin \Sigma
\quad \text{ but }\quad \Cone(R \setminus \{\mathbf{u}\}) \in \Sigma \quad \text{ for every }\mathbf{u} \in R.
\] 
We denote by $\PC(\Sigma)$ the set of primitive collections of $\Sigma$.
For a primitive collection $R = \{\mathbf{u}'_1, \dots,\mathbf{u}'_{\ell}\}$, we have $\mathbf{u}'_1 + \cdots+\mathbf{u}'_{\ell}=\boldsymbol{0}$ or there exists a unique cone~$\sigma$ of positive dimension such that $\mathbf{u}'_1 + \cdots+\mathbf{u}'_{\ell}$ is in the interior of $\sigma$. That is, 
\begin{equation}\label{eq:primitive}
\mathbf{u}'_1 + \cdots+\mathbf{u}'_{\ell}=\begin{cases}
\boldsymbol{0}, &\text{ or }\\
\lambda_1 \mathbf{u}_1 + \cdots+ {\lambda_{s}} \mathbf{u}_{s},&{}
\end{cases}
\end{equation}
where $\mathbf{u}_1,\dots,\mathbf{u}_{s}$ are the primitive generators of $\sigma$ and $\lambda_1,\dots,\lambda_{s}$ are positive integers.
We call~\eqref{eq:primitive} a \emph{primitive relation}, and the \emph{degree} $\deg R$ of a primitive collection $R$ is defined to be 
\begin{equation} \label{eq:degree}
\deg R :=
\begin{cases}
\ell & \text{ if }\mathbf{u}'_1 + \cdots+\mathbf{u}'_{\ell} = \mathbf{0}, \\
\ell - (\lambda_1+\cdots+\lambda_s) &\text{ otherwise}. 
\end{cases}
\end{equation}

\begin{proposition}[{\cite[Proposition~2.3.6]{Batyrev99}}]\label{prop:batyrev}
A smooth compact toric variety $X$ is Fano if and only if $\deg R>0$ for every primitive collection $R$ of the fan $\Sigma$ of $X$. 
\end{proposition}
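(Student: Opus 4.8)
The plan is to translate the Fano condition into strict convexity of the support function of the anticanonical divisor, and then match that against the degree condition on primitive relations. Write $\Sigma(1)$ for the set of rays of $\Sigma$ and $\vv_\rho$ for the primitive generator of $\rho\in\Sigma(1)$. Recall that $X$ is Fano precisely when the anticanonical divisor $-K_X=\sum_{\rho\in\Sigma(1)}D_\rho$ is ample, equivalently when its support function $\varphi$ is strictly convex; here $\varphi$ is linear on each cone of $\Sigma$, and on a maximal cone $\sigma$ it equals the unique functional $m_\sigma$ on $N$ with $\langle m_\sigma,\vv_\rho\rangle=-1$ for all $\rho\in\sigma(1)$ (this exists because $X$ is smooth, so $\{\vv_\rho:\rho\in\sigma(1)\}$ is a lattice basis). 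Writing any vector as a nonnegative combination of the ray generators of a maximal cone containing it, one checks that strict convexity of $\varphi$ is equivalent to the inequalities $\langle m_\sigma,\vv_\rho\rangle>\varphi(\vv_\rho)=-1$, i.e.\ $\langle m_\sigma,\vv_\rho\rangle\ge0$, for every maximal cone $\sigma$ and every ray $\rho\notin\sigma(1)$. Thus it suffices to prove that $X$ is Fano if and only if these inequalities hold.

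For the ``only if'' direction I would argue directly on a primitive relation. Let $R=\{\vv_1,\dots,\vv_\ell\}$ be a primitive collection with primitive relation $\vv_1+\dots+\vv_\ell=\lambda_1\mathbf{u}_1+\dots+\lambda_s\mathbf{u}_s$, where $\sigma_0=\Cone(\mathbf{u}_1,\dots,\mathbf{u}_s)\in\Sigma$ (the case $\vv_1+\dots+\vv_\ell=\0$ is trivial, as then $\deg R=\ell\ge2$). Choose a maximal cone $\sigma\supseteq\sigma_0$; then $\sigma_0$ is a face of $\sigma$, so each $\mathbf{u}_j$ is a ray generator of $\sigma$ and $\langle m_\sigma,\vv_1+\dots+\vv_\ell\rangle=-\sum_j\lambda_j$. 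Since $R$ is primitive, the $\vv_i$ do not all lie in $\sigma(1)$—otherwise $\Cone(R)$ would be a face of $\sigma$, hence a cone of $\Sigma$—so, evaluating $m_\sigma$ term by term and using the inequalities above, at least one summand $\langle m_\sigma,\vv_i\rangle$ is $\ge0$ and the rest are $\ge-1$; hence $-\sum_j\lambda_j=\sum_i\langle m_\sigma,\vv_i\rangle>-\ell$, i.e.\ $\deg R=\ell-\sum_j\lambda_j>0$.

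For the converse I would argue by contraposition: if $X$ is not Fano, there are a maximal cone $\sigma$ and a ray $\vv_0\notin\sigma(1)$ with $\langle m_\sigma,\vv_0\rangle\le-1$, and one wants to produce a primitive collection of nonpositive degree. Since $\sigma$ is maximal, $\{\vv_0\}\cup\sigma(1)$ does not span a cone of $\Sigma$, so it contains a primitive collection $R$; as every subset of $\sigma(1)$ spans a face of $\sigma$, necessarily $\vv_0\in R$, say $R=\{\vv_0,\mathbf{u}_1,\dots,\mathbf{u}_k\}$ with $\mathbf{u}_i\in\sigma(1)$. Writing its primitive relation as $\vv_0+\mathbf{u}_1+\dots+\mathbf{u}_k=\sum_q\lambda_q\vw_q$ and applying $m_\sigma$ gives $\sum_q\lambda_q\langle m_\sigma,\vw_q\rangle=\langle m_\sigma,\vv_0\rangle-k\le-(k+1)$, whence $\sum_q\lambda_q\ge k+1=|R|$ and $\deg R\le0$—\emph{provided} $-K_X$ is nef, which guarantees $\langle m_\sigma,\vw_q\rangle\ge-1$ for every $q$. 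So the main obstacle is exactly this nefness, which is not automatic for a smooth complete toric variety: one must show separately that $\deg R\ge0$ for all primitive collections already forces $-K_X$ to be nef (for instance by a minimal-counterexample argument, choosing $(\sigma,\vv_0)$ with $\langle m_\sigma,\vv_0\rangle$ as negative as possible and locating the cone spanned by the $\vw_q$ relative to $\sigma$), after which the estimate above upgrades nef to ample. Alternatively, one can bypass this bootstrap by using that the Mori cone $\overline{\mathrm{NE}}(X)$ is generated by the curve classes $r(R)$ of the primitive relations and that, since $-K_X=\sum_\rho D_\rho$, one has $(-K_X)\cdot r(R)=\deg R$; Kleiman's criterion then makes ``$-K_X$ ample'' equivalent to ``$\deg R>0$ for every $R\in\PC(\Sigma)$.''
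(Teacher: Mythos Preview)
The paper does not prove this proposition at all: it is quoted from Batyrev~\cite{Batyrev99} and used as a black box, so there is no in-paper argument to compare yours against.

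On your argument itself: the ``only if'' half is correct and is the standard support-function computation. The ``if'' half you rightly flag as incomplete. Your contrapositive produces a primitive collection $R=\{\vv_0,\mathbf u_1,\dots,\mathbf u_k\}$ from a failure of strict convexity at $(\sigma,\vv_0)$, but the bound $\sum_q\lambda_q\ge k+1$ genuinely needs $\langle m_\sigma,\vw_q\rangle\ge -1$ for each $q$, i.e.\ nefness of $-K_X$, which is not assumed. The ``minimal counterexample'' bootstrap you gesture at does not close this: minimality over all pairs $(\sigma',\vv')$ only yields $\langle m_\sigma,\vw_q\rangle\ge \langle m_\sigma,\vv_0\rangle$, which is too weak once $\langle m_\sigma,\vv_0\rangle\le -2$. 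The Mori-cone route you mention at the end is in fact the correct (and Batyrev's own) approach, but it is not free: it rests on the theorem that the classes of the primitive relations generate $\overline{\mathrm{NE}}(X)$ for a smooth projective toric variety, after which the toric Kleiman criterion gives the equivalence. If you want a self-contained converse, either cite that generation result explicitly and then invoke Kleiman, or bypass primitive collections in favor of wall relations (torus-invariant curves), where the needed intersection numbers with $-K_X$ are computable directly.
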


From the description of the fan $\Sigma$ of $\B_m$, we have 
\[
\PC(\Sigma) = \{ R_j \mid j =1,\dots,m\} = \{ \{ \vv_j, \ve_{j}^1,\dots,\ve_{j}^{n_{j}} \} \mid j=1,\dots,m\}.
\]
For each primitive collection $R_j$ $(1\le j<m)$, we obtain nonnegative integer vectors $\vlambda_{i,j}=(\lambda_{i,j}^{0},\lambda_{i,j}^{1},\dots,\lambda_{i,j}^{n_{i}})\in\Z_{\geq 0}^{n_{i}+1}$ $(1\leq j<i\leq m)$ such that 
\begin{align*}
&\vv_j + \ve^{1}_{j}+\dots+\ve^{n_{j}}_{j} =\sum_{i=j+1}^{m}\left(\lambda_{i,j}^{0}\vv_{i}+\sum_{k=1}^{n_{i}}\lambda_{i,j}^{k}\ve^{k}_{i}\right)\quad\text{and}\\
&\prod_{k=0}^{n_{i}}\lambda_{i,j}^{k}=0\quad \text{for each $i$}.
\end{align*}

The following lemma follows immediately from Proposition~\ref{prop:batyrev}. 
\begin{lemma}\label{lem:Fano}
The generalized Bott manifold $\B_m$ is Fano if and only if 
\begin{equation}
\sum_{i=j+1}^{m}\sum_{k=0}^{n_{i}}\lambda_{i,j}^{k}\leq n_{j} \quad \text{ for all }j=1,\dots,m.\label{eq:fano2}
\end{equation}
\end{lemma}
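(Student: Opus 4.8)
The plan is to apply Batyrev's criterion (Proposition \ref{prop:batyrev}) directly, so the whole task reduces to computing the degree of each primitive collection $R_j$ in terms of the vectors $\vlambda_{i,j}$ already introduced. First I would recall that $\PC(\Sigma) = \{R_1,\dots,R_m\}$, where $R_j = \{\vv_j,\ve_j^1,\dots,\ve_j^{n_j}\}$, so $R_j$ consists of $\ell = n_j + 1$ rays and $\deg R_j$ is either $n_j+1$ (if the primitive relation is trivial) or $n_j+1$ minus the sum of the coefficients appearing on the right-hand side of the primitive relation \eqref{eq:primitive}. By the definition of the vectors $\vlambda_{i,j}$ in the excerpt, the primitive relation for $R_j$ reads
\[
\vv_j + \ve^1_j + \cdots + \ve^{n_j}_j = \sum_{i=j+1}^{m}\left(\lambda_{i,j}^0 \vv_i + \sum_{k=1}^{n_i}\lambda_{i,j}^k \ve_i^k\right),
\]
so the sum of the coefficients on the right-hand side is exactly $\sum_{i=j+1}^m \sum_{k=0}^{n_i}\lambda_{i,j}^k$; hence $\deg R_j = (n_j+1) - \sum_{i=j+1}^m\sum_{k=0}^{n_i}\lambda_{i,j}^k$ in the nontrivial case, and $\deg R_j = n_j+1$ when the right-hand side vanishes (equivalently when all $\lambda_{i,j}^k = 0$, which makes the above formula agree in both cases). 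Then Proposition \ref{prop:batyrev} says $\B_m$ is Fano iff $\deg R_j > 0$ for every $j$, i.e.\ iff $(n_j+1) - \sum_{i=j+1}^m\sum_{k=0}^{n_i}\lambda_{i,j}^k > 0$, which is equivalent to $\sum_{i=j+1}^m\sum_{k=0}^{n_i}\lambda_{i,j}^k \le n_j$, namely \eqref{eq:fano2}. (For $j = m$ the sum on the left is empty, so the inequality is automatic, consistent with the fact that the fiber $\C P^{n_m}$ contributes a trivial primitive relation.)

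The one point that needs care — and the only place where anything must actually be checked rather than unwound from definitions — is that the right-hand side of the primitive relation for $R_j$, when nonzero, genuinely lies in the interior of a cone of $\Sigma$ whose primitive generators are precisely the rays $\vv_i$ and $\ve_i^k$ carrying a strictly positive coefficient, and that these are distinct rays with the stated coefficients. This is where the condition $\prod_{k=0}^{n_i}\lambda_{i,j}^k = 0$ for each $i$ enters: for a fixed $i$ the rays $\{\vv_i,\ve_i^1,\dots,\ve_i^{n_i}\}$ form the primitive collection $R_i$, so $\Cone(R_i)\notin\Sigma$, but dropping any one of them gives a cone of $\Sigma$; thus the positive-coefficient rays from level $i$ span a face of $\Sigma$ exactly when at least one of the $n_i+1$ coefficients vanishes, which is what the product condition guarantees. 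Since across different levels $i$ the corresponding rays lie in complementary coordinate blocks and the fan is the product fan $\Sigma_1\times\cdots\times\Sigma_m$ combinatorially, the union of these faces over $i = j+1,\dots,m$ spans a genuine cone of $\Sigma$, and the vector on the right-hand side lies in its relative interior with the listed $\lambda_{i,j}^k$ as its coordinates. Hence $s$ and $\lambda_1,\dots,\lambda_s$ in \eqref{eq:primitive} are exactly the nonzero entries among the $\lambda_{i,j}^k$, and $\sum \lambda_r = \sum_{i,k}\lambda_{i,j}^k$ as claimed.

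I expect the main (mild) obstacle to be bookkeeping: making sure the degenerate case where the right-hand side is $\mathbf 0$ is folded cleanly into the general formula, and confirming that the vectors $\vlambda_{i,j}$ produced above really are the unique ones realizing the primitive relation, so that the two cases of \eqref{eq:degree} combine into the single inequality \eqref{eq:fano2}. Given the explicit form \eqref{eq_Bott_matrix} of the ray generators and the already-recorded description of the maximal cones, none of this requires new computation beyond what is set up in the paragraph preceding the lemma; the proof is essentially a two-line invocation of Proposition \ref{prop:batyrev} once the identification $\sum_{r}\lambda_r = \sum_{i=j+1}^m\sum_{k=0}^{n_i}\lambda_{i,j}^k$ is in hand.
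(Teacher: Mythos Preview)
Your proposal is correct and follows exactly the same approach as the paper, which simply states that the lemma ``follows immediately from Proposition~\ref{prop:batyrev}'' without further detail. Your write-up in fact supplies the verification (that the expression for $\vv_j+\sum_k\ve_j^k$ in terms of the $\lambda_{i,j}^k$ is the genuine primitive relation, using the product condition and the combinatorial product structure of the fan) which the paper leaves implicit.
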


\begin{example}
For a generalized Bott manifold $\B_3$ associated with a generalized Bott matrix of type $(n_1,n_2,n_3)=(3,2,2)$ defined by
\[
A = \begin{bmatrix}
- \1 & \mathbf{0} & \mathbf{0} \\ 
\bfa{2}{1} & - \1 & \mathbf{0}\\
\bfa{3}{1} & \bfa{3}{2} & -\1 
\end{bmatrix} = 
\begin{bmatrix}
-1 & 0 & 0 \\
-1 & 0 & 0 \\
-1 & 0 & 0 \\
-1 & -1 & 0 \\
-1 & -1 & 0 \\
1 & 1 & -1 \\
2 & 0 & -1
\end{bmatrix},
\]
we have 
\[
\begin{split}
\vv_1+\ve^1_1+\ve_1^2 +\ve_1^3 &= (0, 0, 0, -1, -1, 1, 2)^T = \vv_2+2\ve_3^2, \\
\vv_2+\ve^1_2 + \ve_2^2 &= (0,0,0,0,0,1,0)^T = \ve^1_3, \\
\vv_3+\ve^1_3 + \ve^2_3 &= \mathbf{0}.
\end{split}
\]
Therefore, the integer vectors $\vlambda_{i,j}$ are given as follows:
\[
\begin{split}
&\vlambda_{2,1} = (\Lij{2}{1}{0},\Lij{2}{1}{1},\Lij{2}{1}{2}) = (1,0,0), \quad 
\vlambda_{3,1} = (\Lij{3}{1}{0}, \Lij{3}{1}{1}, \Lij{3}{1}{2}) = (0,0,2), \\
& \vlambda_{3,2} = (\Lij{3}{2}{0},\Lij{3}{2}{1},\Lij{3}{2}{2}) = (0, 1,0).
\end{split}
\]
Since we have
\[
\sum_{i=2}^3 \sum_{k=0}^{n_i} \Lij{i}{1}{k} = 3 \leq n_1 = 3 \quad\text{and}\quad
\sum_{k=0}^{n_3} \Lij{3}{2}{k} = 1 \leq n_2 = 2,
\]
the generalized Bott manifold $\B_3$ is Fano by Lemma~\ref{lem:Fano}. 
\end{example}

\section{Two-stage Fano generalized Bott manifolds}\label{sec:two-stage}

It is known that two-stage generalized Bott manifolds are {\em diffeomorphic} to each other if and only if their cohomology rings are isomorphic as graded rings (\cite[Theorem~6.1]{CMS-tams}). There are many two-stage generalized Bott manifolds which are diffeomorphic but not isomorphic as varieties to each other. Moreover, as is observed for Hirzebruch surfaces, two-stage generalized Bott manifolds are not necessarily isomorphic as varieties even if there is a $c_1$-preserving isomorphism between their integral cohomology rings. In this section, we prove the $c_1$-cohomological rigidity for two-stage {\em Fano} generalized Bott manifolds, that is, we prove  Theorem~\ref{thm_rank_2}.

Let $\B$ be the two-stage generalized Bott manifold associated with a generalized Bott matrix $A$ of type $(n_1,n_2)$: 
\begin{equation}\label{eq:2-stage_GB_mx}
A=\begin{bmatrix}
-1&0\\
\vdots&\vdots\\
-1&0\\
a_1&-1\\
\vdots&\vdots\\
a_{n_2}&-1
\end{bmatrix}_{(n_1+n_2)\times 2}
\end{equation} 
Then 
\begin{equation} \label{eq:2-stage_B}
\B=P(\underline{\C}\oplus\gamma^{a_1}\oplus\cdots \oplus \gamma^{a_{n_2}}),
\end{equation}
where $\gamma$ denotes the tautological line bundle over $\C P^{n_1}$. 
It follows from \eqref{eq_coh_GB} and \eqref{eq:c1} that 
\begin{align} 
H^*(\B;\Z)&=\Z[x_1,x_2]\Big/\left\langle x_1^{n_1+1},x_2\prod_{k=1}^{n_2}(x_2-a_kx_1)\right\rangle,\label{eq:coh_2-stage}\\
c_1(\B)&=\left(n_1+1-\sum_{k=1}^{n_2}a_k\right)x_1+(n_2+1)x_2. \label{eq:c1_2-stage}
\end{align}

We note that the isomorphism class of $\B$ as a variety is unchanged by permuting $a_k$'s, so it depends on the multiset $\{a_1,\dots,a_{n_2}\}$. Related to this, we recall an elementary fact used later. The \emph{elementary symmetric polynomials} in $n$ variables $x_{1},\dots,x_{n}$ for $r=1,\dots,n$ are defined by \[
e_{r}(x_{1},\dots,x_{n})=\sum_{1\leq j_{1}<\dots<j_{r}\leq n} x_{j_{1}}\dots x_{j_{r}}.
\]
For a vector $(b_{1},\dots,b_{n})\in\Z^{n}$, the values $e_r(b_1,\dots,b_n)$ $(1\le r\le n)$ determine the set $\{b_1,\dots,b_n\}$ as a multiset, in particular if $e_r(b_1,\dots,b_{n})=0$ for $1\le r\le n$, then $b_k=0$ for every $k=1,\dots,n$. Indeed, this fact immediately follows from the identity
\[
\prod_{k=1}^n(1+b_kt)=\sum_{r=1}^ne_r(b_1,\dots,b_n)t^r.
\] 

\begin{lemma} \label{lemm:n_1>n_2}
Let $\B$ be the two-stage generalized Bott manifold associated with~\eqref{eq:2-stage_GB_mx}. Suppose that $n_1>n_2$ and there exists a nonzero element $y\in H^2(\B;\Z)$ such that $y^{n_2+1}=0$. Then $a_k=0$ for every $k=1,\dots,n_2$, in particular, $\B$ is isomorphic to $\C P^{n_1}\times \C P^{n_2}$ as a variety. 
\end{lemma}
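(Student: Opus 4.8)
The plan is to write a general element $y \in H^2(\B;\Z)$ as $y = p\,x_1 + q\,x_2$ with $p,q \in \Z$, expand $y^{n_2+1}$ in the ring presentation \eqref{eq:coh_2-stage}, and extract enough vanishing conditions on the coefficients to force $q$ to behave like $0$ or $\pm 1$, and then to force all $a_k = 0$. First I would reduce to the monomial basis: since $\{x_1^i x_2^j : 0 \le i \le n_1,\ 0 \le j \le n_2\}$ is a $\Z$-basis of $H^*(\B;\Z)$ by \eqref{eq:coh_2-stage}, I need the relation $x_2^{n_2+1} = x_2^{n_2+1} - x_2\prod_{k=1}^{n_2}(x_2 - a_k x_1) = \sum_{r=1}^{n_2} (-1)^{r+1} e_r(a_1,\dots,a_{n_2})\, x_1^r\, x_2^{n_2+1-r}$, so that $x_2^{n_2+1}$ and all higher powers of $x_2$ can be rewritten with $x_2$-degree $\le n_2$. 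Then I would compute $y^{n_2+1} = \sum_{j} \binom{n_2+1}{j} p^{\,n_2+1-j} q^{\,j} x_1^{n_2+1-j} x_2^{j}$ and substitute the relation for the single term with $j = n_2+1$, namely $q^{n_2+1} x_2^{n_2+1}$. Crucially, because $n_1 > n_2$, none of the monomials $x_1^{n_2+1-j} x_2^j$ with $j \le n_2$ is killed by $x_1^{n_1+1}$ (their $x_1$-exponent is at most $n_2+1 \le n_1$), so after the one substitution we get an honest expansion in the monomial basis whose coefficients must all vanish.

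The next step is to read off the coefficients. Matching the coefficient of $x_1^{n_2+1}$ (the $j=0$ term, which receives no contribution from the substitution since the substituted relation has $x_2$-degree between $1$ and $n_2$) gives $p^{\,n_2+1} = 0$, hence $p = 0$. With $p = 0$, the element is $y = q x_2$ and $y^{n_2+1} = q^{n_2+1} x_2^{n_2+1} = q^{n_2+1}\sum_{r=1}^{n_2} (-1)^{r+1} e_r(a_1,\dots,a_{n_2}) x_1^r x_2^{n_2+1-r}$, which must be $0$; since $y \ne 0$ forces $q \ne 0$, we may cancel $q^{n_2+1}$ and conclude $e_r(a_1,\dots,a_{n_2}) = 0$ for all $r = 1,\dots,n_2$. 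By the elementary fact recalled just before the lemma (the identity $\prod_k(1+a_k t) = \sum_r e_r(a_1,\dots,a_{n_2}) t^r$), vanishing of all elementary symmetric functions forces $a_k = 0$ for every $k$.

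Finally, once all $a_k = 0$, the defining bundle \eqref{eq:2-stage_B} is a sum of trivial line bundles, so $\B = P(\underline{\C}^{\oplus(n_2+1)}) = \C P^{n_1} \times \C P^{n_2}$ as a variety, which is the last assertion. I expect the only real bookkeeping point — not so much an obstacle as the step that needs care — to be making the substitution of $x_2^{n_2+1}$ cleanly and confirming that no monomial of the resulting expansion is redundant modulo the ideal $\langle x_1^{n_1+1}, \dots\rangle$; this is exactly where the hypothesis $n_1 > n_2$ is used, since if $n_1 = n_2$ the top term $x_1^{n_2+1} = x_1^{n_1+1}$ would vanish and the argument $p = 0$ would break down. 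After that, extracting $p=0$ and then the symmetric-function vanishing is routine.
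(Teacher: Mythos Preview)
Your proposal is correct and follows essentially the same argument as the paper: write $y=px_1+qx_2$, expand $y^{n_2+1}$ by the binomial theorem, use $n_2+1\le n_1$ to read off $p=0$ from the coefficient of $x_1^{n_2+1}$, then use $q\ne 0$ and the linear independence of the monomials $x_1^r x_2^{n_2+1-r}$ (again from $n_1>n_2$) to force all $e_r(a_1,\dots,a_{n_2})=0$. The only cosmetic difference is that you substitute the relation for $x_2^{n_2+1}$ explicitly before reading off the $x_1^{n_2+1}$ coefficient, whereas the paper argues more tersely; the substance is identical.
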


\begin{proof}
One can express $y=px_1+qx_2$ with integers $p$ and $q$ by \eqref{eq:coh_2-stage}. Then 
\[
0=y^{n_2+1}=(px_1+qx_2)^{n_2+1}=p^{n_2+1}x_1^{n_2+1}+\binom{n_2+1}{1}p^{n_2}qx_1^{n_2}x_2+\dots + q^{n_2+1} x_2^{n_2+1}.
\]
Since $n_2+1\le n_1$, the coefficient of $x_1^{n_2+1}$ above must vanish by \eqref{eq:coh_2-stage}; so $p=0$ and hence $y=qx_2$. Since $y\not=0$ and $y^{n_2+1}=0$ by assumption, we get $x_2^{n_2+1}=0$. Then it follows from \eqref{eq:coh_2-stage} that we have 
\begin{equation*} \label{eq:era}
0= x_2\prod_{k=1}^{n_2}(x_2-a_kx_1)=\sum_{r=1}^{n_2}(-1)^r e_r(a_1,\dots,a_{n_2})x_1^rx_2^{n_2+1-r}.
\end{equation*}
Since $n_1>n_2$ by assumption, one can see from \eqref{eq:coh_2-stage} that the elements $x_1^rx_2^{n_2+1-r}$ $(1\le r\le n_2)$ above are linearly independent; so their coefficients above must vanish. Hence $a_k=0$ for every~$k=1,\dots,n_2$. 
\end{proof}

Let $\B$ (resp. $\tilde \B$) be a two-stage generalized Bott manifold associated with a generalized Bott matrix of type $(n_1,n_2)$ (resp. $(\tilde{n}_1,\tilde{n}_2)$). If their cohomology rings are isomorphic to each other, then $\{n_1,n_2\}=\{\tilde{n}_1,\tilde{n}_2\}$ as sets as remarked in Remark~\ref{rm1}, so $(\tilde{n}_1,\tilde{n}_2)=(n_1,n_2)$ or $(n_2,n_1)$. When their cohomology rings are isomorphic to $H^*(\C P^{n_1}\times \C P^{n_2};\Z)$, both cases can occur but otherwise only the former case occurs as is shown below. 

\begin{lemma} \label{lemm:fiber_dimension}
Let $\B$ and $\tilde{\B}$ and be as above. If $H^*(\B;\Z)\cong H^*(\tilde{\B};\Z)$ as graded rings and they are not isomorphic to $H^*(\C P^{n_1}\times\C P^{n_2};\Z)$, then $(\tilde{n}_1,\tilde{n}_2)=(n_1,n_2)$. 
\end{lemma}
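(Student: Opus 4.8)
The plan is to distinguish the two a priori possibilities $(\tilde n_1,\tilde n_2)=(n_1,n_2)$ and $(\tilde n_1,\tilde n_2)=(n_2,n_1)$ using the ring structure, and to show that the second forces both rings to be isomorphic to $H^*(\C P^{n_1}\times\C P^{n_2};\Z)$. Without loss of generality assume $n_1\neq n_2$ (if $n_1=n_2$ there is nothing to prove), and by symmetry assume $n_1>n_2$. Suppose for contradiction that $(\tilde n_1,\tilde n_2)=(n_2,n_1)$, i.e.\ $\tilde\B$ is a $\C P^{n_1}$-bundle over $\C P^{n_2}$, associated with a matrix of type $(n_2,n_1)$ with parameters $\tilde a_1,\dots,\tilde a_{n_1}$. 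The key structural feature I would exploit is the existence, in each cohomology ring, of a distinguished degree-two class whose $(n_i+1)$-st power vanishes: in $H^*(\B;\Z)$ the class $x_1$ satisfies $x_1^{n_1+1}=0$, and in $H^*(\tilde\B;\Z)$ the class $\tilde x_1$ (the pulled-back hyperplane class from the base $\C P^{n_2}$) satisfies $\tilde x_1^{n_2+1}=0$.

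The heart of the argument is a nilpotence-order computation. Let $\varphi\colon H^*(\tilde\B;\Z)\to H^*(\B;\Z)$ be the given graded ring isomorphism. Set $y:=\varphi(\tilde x_1)\in H^2(\B;\Z)$; then $y$ is nonzero and $y^{n_2+1}=0$. Since we are assuming $n_1>n_2$, Lemma~\ref{lemm:n_1>n_2} applies and gives $a_k=0$ for all $k$, hence $\B\cong\C P^{n_1}\times\C P^{n_2}$ and in particular $H^*(\B;\Z)\cong H^*(\C P^{n_1}\times\C P^{n_2};\Z)$. But then $H^*(\tilde\B;\Z)$ is also isomorphic to this ring, contradicting the hypothesis. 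This already handles the case $n_1>n_2$. For the case $n_1<n_2$ one runs the mirror-image argument: now it is $\tilde\B$ that is a bundle with larger fiber dimension $n_2$ over the smaller base, so applying Lemma~\ref{lemm:n_1>n_2} to $\tilde\B$ (with the class $x_1$ transported via $\varphi^{-1}$, whose $(n_1+1)$-st power vanishes and $n_1+1\le n_2$) forces $\tilde\B\cong\C P^{n_2}\times\C P^{n_1}$, again contradicting the hypothesis. In either case the assumed swap $(\tilde n_1,\tilde n_2)=(n_2,n_1)$ is impossible, so $(\tilde n_1,\tilde n_2)=(n_1,n_2)$.

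I would then double-check the boundary bookkeeping: Lemma~\ref{lemm:n_1>n_2} is stated for the manifold whose base has the \emph{larger} dimension, so it is important to verify that in the reduction the roles are assigned correctly, i.e.\ that the class whose power we kill is the pulled-back base class of the \emph{other} manifold, matched against the fiber dimension of the manifold we apply the lemma to. The one genuine subtlety — the step I expect to be the main obstacle — is making sure the dichotomy ``$\{n_1,n_2\}=\{\tilde n_1,\tilde n_2\}$'' from Remark~\ref{rm1}(1) is actually available and that, after fixing $n_1>n_2$, the smaller index really does land on the base in the ``wrong'' configuration; once that is pinned down, everything is an application of the already-proved Lemma~\ref{lemm:n_1>n_2} together with the elementary-symmetric-polynomial fact recalled just before it. No new computation beyond what is already in the excerpt is needed.
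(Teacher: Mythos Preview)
Your proposal is correct and follows essentially the same approach as the paper: assume the swap $(\tilde n_1,\tilde n_2)=(n_2,n_1)$ with $n_1>n_2$, transport the base class $\tilde x_1$ through $\varphi$ to get a nonzero $y\in H^2(\B;\Z)$ with $y^{n_2+1}=0$, and invoke Lemma~\ref{lemm:n_1>n_2} to force $\B\cong\C P^{n_1}\times\C P^{n_2}$, a contradiction. The paper dispatches the case $n_1<n_2$ with the single phrase ``otherwise we interchange the role of $\B$ and $\tilde\B$,'' which is exactly the mirror argument you spell out.
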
 

\begin{proof}
Let $\varphi\colon H^*(\tilde\B;\Z)\to H^*(\B;\Z)$ be an isomorphism as graded rings. Suppose that $(\tilde{n}_1,\tilde{n}_2)\not=(n_1,n_2)$. Then $(\tilde{n}_1,\tilde{n}_2)=(n_2,n_1)$ and $n_1\not=n_2$. We may assume $n_1>n_2$ because otherwise we interchange the role of $\B$ and $\tilde{\B}$. Let $\tilde{x}$ be a nonzero element of $H^2(\tilde{\B};\Z)$ coming from the base space $\C P^{n_2}$ of $\tilde{\B}$, so $\tilde{x}^{n_2+1}=0$. Then since $\varphi(\tilde{x})\not=0$ and $\varphi(\tilde{x})^{n_2+1}=0$, $\B$ is isomorphic to $\C P^{n_1}\times \C P^{n_2}$ as a variety by Lemma~\ref{lemm:n_1>n_2}. This contradicts the assumption on $H^*(\B)$, proving the lemma. 
\end{proof}

For the integers $a_k$ $(1\le k\le n_2)$ in \eqref{eq:2-stage_GB_mx}, we may assume that 
\begin{equation} \label{eq:ai_nonnegative}
a_k\ge 0\quad\text{ for every $k=1,\dots,n_2$}.
\end{equation}
Indeed, $P(E)$ is isomorphic to $P(E\otimes L)$ for any complex vector bundle $E$ and any line bundle $L$; so if the minimum among $a_1,\dots,a_{n_2}$, say $a_d$, is negative, then we consider $P(E\otimes \gamma^{-a_d})$ instead of $P(E)$ for $E=\underline{\C}\oplus\gamma^{a_1}\oplus\cdots \oplus \gamma^{a_{n_2}}$ in \eqref{eq:2-stage_B}, where the exponents $a_k-a_d$ of $\gamma$ appearing in $E\otimes \gamma^{-a_d}$ are all nonnegative. 

The assumption \eqref{eq:ai_nonnegative} is convenient to see the Fano condition for $\B$. As before, we denote the $j$th colomn vector in \eqref{eq:2-stage_GB_mx} by $\vv_j$ where $j=1,2$. Then 
\[
\vv_1+\ve_1^1+\cdots+\ve_1^{n_1}=a_1\ve_2^1+\cdots+a_{n_2}\ve_2^{n_2}\quad\text{and}\quad \vv_2+\ve_2^1+\cdots+\ve_2^{n_2}=\mathbf{0}.
\]
Since $a_k\ge 0$ every $k=1,\dots,n_2$, it follows from Lemma~\ref{lem:Fano} that $\B$ is Fano if and only if 
\begin{equation} \label{eq:Fano_2-stage}
\sum_{k=1}^{n_2}a_k\le n_1.
\end{equation}

We denote the generalized Bott manifold associated with~\eqref{eq:2-stage_GB_mx} by $\B(n_1,(a_1,\dots,a_{n_2}))$, where we take $a_k\ge 0$ for $k=1,\dots,n_2$ by \eqref{eq:ai_nonnegative}. 

\begin{proposition}[cf. {\cite[Proposition~1.8]{McDuff2011}}] \label{prop:2-stage-trivial}
Let $\B=\B(n_1,(a_1,\dots,a_{n_2}))$. Suppose that $H^{\ast}(\B;\Z)$ is isomorphic to $H^{\ast}(\CP^{n_1}\times \CP^{n_2};\Z)$ as graded rings. If either 
\begin{enumerate}
\item $n_1>n_2$, or
\item $n_1\le n_2$ and $\B$ is Fano,
\end{enumerate}
then $a_k=0$ for every $k=1,\dots,n_2$ {\rm (}so $\B$ is isomorphic to $\CP^{n_1}\times \CP^{n_2}$ as a variety{\rm )}. 
\end{proposition}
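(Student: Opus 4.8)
The plan is to reduce everything to a statement about the cohomology ring presentation \eqref{eq:coh_2-stage} together with the elementary symmetric polynomial fact recalled just before Lemma~\ref{lemm:n_1>n_2}. Case~(1) is already essentially done: if $n_1 > n_2$, then $H^*(\B;\Z) \cong H^*(\CP^{n_1}\times\CP^{n_2};\Z)$ produces a nonzero $y \in H^2(\B;\Z)$ with $y^{n_2+1}=0$ (the image of the degree-two generator coming from the $\CP^{n_2}$ factor), and Lemma~\ref{lemm:n_1>n_2} immediately gives $a_k=0$ for all $k$. So the real content is case~(2), where $n_1 \le n_2$.

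For case~(2), I would argue as follows. Since $H^*(\B;\Z)\cong H^*(\CP^{n_1}\times\CP^{n_2};\Z)$, there is a nonzero $y = px_1 + qx_2 \in H^2(\B;\Z)$ with $y^{n_1+1}=0$ (the image of the generator of the $\CP^{n_1}$-factor cohomology; note I now use the exponent $n_1+1$, which is the smaller vanishing exponent since $n_1\le n_2$). Expanding $y^{n_1+1}=(px_1+qx_2)^{n_1+1}$ using \eqref{eq:coh_2-stage} and collecting the top $x_2$-power term shows $q^{n_1+1}x_2^{n_1+1}$ must be expressible via the relation $x_2\prod_k(x_2 - a_k x_1)$; more carefully, one reduces $y^{n_1+1}$ to a polynomial of $x_2$-degree at most $n_1$ in $x_1$ after applying both relations, and matching coefficients forces strong constraints. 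The cleaner route: first show $q = 0$ is impossible (else $y = px_1$ with $p\ne 0$ and $y^{n_1+1} = p^{n_1+1}x_1^{n_1+1}$, which is already zero — so this does NOT immediately contradict, hence I must instead pin down which primitive vanishing classes exist). The key observation is that in $H^*(\CP^{n_1}\times\CP^{n_2};\Z)$ the ideal of elements $z$ with $z^{n_1+1}=0$ is generated (in degree two) by the $\CP^{n_1}$-generator, so its preimage gives an honest constraint; combined with the Fano hypothesis \eqref{eq:Fano_2-stage}, namely $\sum_k a_k \le n_1$, this should force $e_r(a_1,\dots,a_{n_2}) = 0$ for all $r$, whence $a_k = 0$ for every $k$ by the elementary symmetric polynomial fact.

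The mechanism by which the Fano condition enters is the point I expect to be the main obstacle, so let me be precise about the intended argument. When $n_1 \le n_2$ the element $x_1^{r}x_2^{n_1+1-r}$ need not be part of an additive basis, so the coefficient-matching used in Lemma~\ref{lemm:n_1>n_2} fails; instead I would use the relation $x_2\prod_{k=1}^{n_2}(x_2 - a_k x_1) = 0$, i.e. $x_2^{n_2+1} = \sum_{r=1}^{n_2}(-1)^{r+1}e_r(\va)x_1^r x_2^{n_2+1-r}$, to reduce any monomial repeatedly; since all $a_k \ge 0$ and $\sum a_k \le n_1$, the reduction terminates in a controlled way and one can track $x_1$-degrees to see that a nonzero $e_r(\va)$ would make $x_1^{n_1+1}$ (which is zero) equal to a nonzero combination, or would obstruct the existence of a square-zero-type class of the right order. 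Concretely, I would compute the Poincaré polynomial / Hilbert series argument: $H^*(\B)\cong H^*(\CP^{n_1}\times\CP^{n_2})$ as graded rings forces the relation $x_1^{n_1+1}=0$ to hold in $\B$ for \emph{some} degree-two class, and the only way the ring $\Z[x_1,x_2]/\langle x_1^{n_1+1}, x_2\prod(x_2-a_kx_1)\rangle$ can be a complete intersection isomorphic to the product ring is for the second relation to factor compatibly; under $\sum a_k \le n_1$ this is rigid enough to force all $a_k=0$. The hard part is making this last rigidity statement clean rather than a brute-force case analysis; I would look for a slick argument via the subring generated by $x_1$ (it is $\Z[x_1]/(x_1^{n_1+1})$ regardless), and study the $H^*(\CP^{n_1})$-module structure of $H^*(\B)$, which is free on $1, x_2, \dots, x_2^{n_2}$ — matching this module structure against that of the product and using the Fano bound should isolate $e_r(\va)=0$.
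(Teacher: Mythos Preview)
Your treatment of case~(1) is fine: the existence of a nonzero degree-two class $y$ with $y^{n_2+1}=0$ feeds directly into Lemma~\ref{lemm:n_1>n_2}.

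Case~(2), however, has a genuine gap. You focus on the class $y$ with $y^{n_1+1}=0$, but (for $n_1<n_2$) this only tells you that $y\in\Z x_1$, which imposes \emph{no} constraint on the $a_k$ whatsoever; it holds in every $\B(n_1,(a_1,\dots,a_{n_2}))$. The vague module-structure and Hilbert-series arguments you sketch do not recover the missing information, and ``a nonzero $e_r(\va)$ would make $x_1^{n_1+1}$ equal to a nonzero combination'' is not how the contradiction arises.

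The constraint comes from the \emph{other} generator. Write the image of the $\CP^{n_2}$-generator as $z=rx_1+sx_2$ (with $s=\pm1$ once you know $y\in\Z x_1$); then $z^{n_2+1}=0$ in $H^*(\B;\Z)$. Expanding and using the relation in \eqref{eq:coh_2-stage}, the coefficient of $x_1x_2^{n_2}$ forces
\[
\sum_{k=1}^{n_2}a_k=\pm(n_2+1)r,
\]
so $\sum_k a_k$ is a multiple of $n_2+1$. Now the Fano bound $0\le\sum_k a_k\le n_1\le n_2<n_2+1$ gives $\sum_k a_k=0$, hence $a_k=0$ for all $k$ by nonnegativity. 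This is exactly what the paper does, packaged via \cite[Theorem~6.1]{CMS-tams}: the ring isomorphism is equivalent to an identity $\prod_{k}(1+a_kx)=(1+bx)^{n_2+1}$ in $\Z[x]/\langle x^{n_1+1}\rangle$ for some integer $b$, and comparing linear terms gives $\sum_k a_k=(n_2+1)b$. The Fano inequality then forces $b=0$ in one line. Your proposal never isolates this divisibility, which is the entire content of case~(2).
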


\begin{proof} 
It follows from~\cite[Theorem~6.1]{CMS-tams} that the assumption $H^{\ast}(\B;\Z)\cong H^{\ast}(\CP^{n_1}\times \CP^{n_2};\Z)$ is equivalent to the existence of an integer $b$ such that 
\begin{equation} \label{eq:height_2}
\prod_{k=1}^{n_2}(1+a_{k}x)=(1+bx)^{n_2+1}\quad\text{in }\ \Z[x]/\langle x^{n_1+1}\rangle.
\end{equation}
If $n_1>n_2$, then we get $b=0$ by comparing the coefficients of the term $x^{n_2+1}$ above. Hence $a_k=0$ for every $k$. If $n_1\leq n_2$ and $\B$ is Fano, then we have $0\leq \sum_{k=1}^{n_2}a_{k}\leq n_1 < n_2+1$ from \eqref{eq:Fano_2-stage}. On the other hand, we have $\sum_{k=1}^{n_2}a_k=(n_2+1)b$ from \eqref{eq:height_2}. Therefore, $b=0$ and hence $\sum_{k=1}^{n_2}a_k=0$. This implies that $a_k=0$ for every $k$ because $a_k\ge 0$ for every $k$. 
\end{proof}

\begin{remark}
\begin{enumerate}
\item The proof above shows that the generalized Bott tower $\B_\bullet$ of height two with $\B=\B_2$ is trivial under the assumption of the proposition. 
\item If the Fano condition is dropped in Proposition~\ref{prop:2-stage-trivial}, then $\B$ is {\em diffeomorphic} to $\C P^{n_1}\times \C P^{n_2}$ (\cite[Corollary 6.3]{CMS-tams}) but not necessarily isomorphic to $\C P^{n_1}\times \C P^{n_2}$ as a variety. Indeed, the Hirzebruch surface $F_a=P(\underline{\C}\oplus\gamma^a)$, which is a two-stage Bott manifold, has the cohomology ring isomorphic to $H^*(\C P^1\times \C P^1;\Z)$ when $a$ is even but $F_a$ is not isomorphic to $\C P^1\times \C P^1$ as a variety unless $a=0$. 
\item When $n_1=n_2=1$, we need the Fano condition as remarked above. However, when $n_1=n_2\ge 2$, the conclusion of the proposition holds without the Fano condition. Indeed, in this case, one can deduce $b=0$ from \eqref{eq:height_2} so that $a_k=0$ for every $k$. 
\end{enumerate}
\end{remark}

Note that $\B(1,(a))$ (i.e. $n_1=n_2=1$) is a Hirzebruch surface $F_a$ and it is Fano if and only if $a=0,1$. When $n_2=1$, the following is known. 

\begin{proposition} \label{prop:n1=1} 
Let $a$ and $\tilde{a}$ be nonnegative integers. 
\begin{enumerate}
\item When $n_1=1$, $H^*(\B(1,(a));\Z)\cong H^*(\B(1,(\tilde{a}));\Z)$ if and only if $a\equiv \tilde{a}\pmod{2}$. 
\item When $n_1>1$, $H^*(\B(n_1,(a));\Z)\cong H^*(\B(n_1,(\tilde{a}));\Z)$ if and only if $a=\tilde{a}$. 
\end{enumerate}

\end{proposition}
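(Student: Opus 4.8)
For the ``if'' directions, the implication $a=\tilde a\Rightarrow H^\ast(\B(n_1,(a));\Z)\cong H^\ast(\B(n_1,(\tilde a));\Z)$ in part (2) is trivial, so there only the converse needs work. For the ``if'' direction of part (1), I would assume $a\equiv\tilde a\pmod 2$, set $c=(\tilde a-a)/2\in\Z$, and check that the graded assignment $x_1\mapsto x_1$, $x_2\mapsto x_2+cx_1$ sends the defining relations $x_1^2$ and $x_2(x_2-\tilde a x_1)$ of $H^\ast(\B(1,(\tilde a));\Z)$ from \eqref{eq:coh_2-stage} (with $n_1=n_2=1$) to $0$ in $H^\ast(\B(1,(a));\Z)$; the only computation is $(x_2+cx_1)\bigl(x_2+(c-\tilde a)x_1\bigr)=x_2^2+(2c-\tilde a)x_1x_2=(a+2c-\tilde a)x_1x_2=0$. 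Thus it descends to a graded ring homomorphism $H^\ast(\B(1,(\tilde a));\Z)\to H^\ast(\B(1,(a));\Z)$, which is surjective (the image contains $x_1$ and $x_2$) and hence an isomorphism, both sides being free $\Z$-modules of the same finite rank.

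For the ``only if'' direction of part (1), I would use the intersection form. When $n_1=1$ the cup product is a unimodular symmetric bilinear form $b\colon H^2\times H^2\to H^4\cong\Z$ with Gram matrix $\left(\begin{smallmatrix}0&1\\1&a\end{smallmatrix}\right)$ in the basis $x_1,x_2$, since $x_1^2=0$, $x_1x_2$ generates $H^4$, and $x_2^2=ax_1x_2$. A graded ring isomorphism $H^\ast(\B(1,(\tilde a));\Z)\xrightarrow{\ \sim\ }H^\ast(\B(1,(a));\Z)$ restricts to an isomorphism on $H^2$ and to multiplication by some $\varepsilon\in\{\pm1\}$ on $H^4$, hence carries the form of $\B(1,(\tilde a))$ onto $\varepsilon$ times that of $\B(1,(a))$. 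Since $b(px_1+qx_2,px_1+qx_2)=2pq+aq^2$ is even for all $p,q$ exactly when $a$ is even, and this ``evenness'' is invariant under isometries and under $b\mapsto-b$, it follows that $a$ and $\tilde a$ have the same parity.

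Turning to the ``only if'' direction of part (2): I would first dispose of the degenerate case. If $a=0$, then $H^\ast(\B(n_1,(\tilde a));\Z)\cong H^\ast(\CP^{n_1}\times\CP^1;\Z)$, so since $n_1>1=n_2$ Proposition~\ref{prop:2-stage-trivial}(1) gives $\tilde a=0=a$; the case $\tilde a=0$ is symmetric. Assuming $a,\tilde a>0$, fix a graded ring isomorphism $\varphi\colon H^\ast(\B(n_1,(\tilde a));\Z)\to H^\ast(\B(n_1,(a));\Z)$; as the degree-two part generates the ring, $\varphi$ is given by $\varphi(x_1)=p_1x_1+q_1x_2$ and $\varphi(x_2)=p_2x_1+q_2x_2$ with $M=\left(\begin{smallmatrix}p_1&p_2\\q_1&q_2\end{smallmatrix}\right)\in\mathrm{GL}_2(\Z)$. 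Using $x_2^{\,j}=a^{j-1}x_1^{\,j-1}x_2$ for $j\ge1$ and $x_1^{n_1+1}=0$ in the target, the image of the relation $x_1^{n_1+1}=0$ reduces, after multiplying by $a\ne0$, to $(p_1+q_1a)^{n_1+1}=p_1^{n_1+1}$, while — comparing coefficients of the classes $x_1^2$ and $x_1x_2$, which are $\Z$-linearly independent in $H^4$ precisely because $n_1>1$ (as one reads off from \eqref{eq:coh_2-stage}) — the image of $x_2(x_2-\tilde a x_1)=0$ reduces to the pair $p_2(p_2-\tilde a p_1)=0$ and $2p_2q_2+aq_2^2=\tilde a(p_2q_1+p_1q_2+aq_1q_2)$. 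The first of the last two shows $p_2\in p_1\Z$, so $p_1\mid\det M=\pm1$ and hence $p_1=\pm1$; then $(p_1+q_1a)^{n_1+1}=p_1^{n_1+1}=\pm1$ forces $p_1+q_1a=\pm1$, whence $q_1a\in\{0,\pm2\}$ and so $q_1=0$ or $a\in\{1,2\}$. Running through the few remaining configurations — the sign of $p_1+q_1a$, the alternative $p_2=0$ versus $p_2=\tilde a p_1$, and $a=1$ versus $a=2$ in the non-vanishing case — and using the second coefficient equation together with $\det M=\pm1$ in each, one finds $\tilde a=a$ every time.

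The main obstacle will be this last finite case check in part (2). The reason the proposition is true, and the reason $n_1=1$ and $n_1>1$ behave differently, is that for $n_1>1$ one has $x_1^2\ne0$ with $x_1^2,x_1x_2$ spanning a rank-two subgroup of $H^4$, which rigidifies $\varphi$; when $n_1=1$ one has $x_1^2=0$ and this rigidity is lost, leaving exactly the mod-$2$ flexibility of part (1). The subtle point inside the case check is that when $n_1$ is odd the relation $x_1^{n_1+1}=0$ admits ``exotic'' images $\varphi(x_1)$ with $p_1+q_1a=-p_1$ (for instance $\pm(x_2-\tfrac a2 x_1)$ when $a$ is even), besides the expected $\varphi(x_1)=\pm x_1$; one must verify that imposing $x_2(x_2-\tilde a x_1)=0$ on these choices still yields $\tilde a=a$. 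Since these exotic configurations can occur only when $a\le2$, the extra verification is short, and everything apart from this bookkeeping is formal.
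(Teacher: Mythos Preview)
Your proposal is correct. The paper itself does not prove this proposition: for part~(1) it simply says the statement is well-known, and for part~(2) it cites \cite[Proposition~5.2]{c-p-s12}. So there is no argument in the paper to compare yours against, and your explicit proof is strictly more informative than what the paper records.

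A few remarks on your write-up. Your handling of part~(1) via the explicit change of variables and the parity of the intersection form is the standard argument and is clean. For the ``only if'' direction of part~(2), your reduction is sound: the key identity $(p_1+q_1a)^{n_1+1}=p_1^{n_1+1}$ follows exactly as you say (the computation uses $x_2^j=a^{j-1}x_1^{j-1}x_2$ and the fact that $x_1^{n_1}x_2$ generates the top cohomology, hence is nonzero), and the linear independence of $x_1^2$ and $x_1x_2$ in $H^4$ when $n_1>1$ is immediate from the monomial basis. Your divisibility argument $p_1\mid\det M$ forcing $p_1=\pm1$ is correct, and the dichotomy $q_1=0$ versus $q_1a=\pm2$ (the latter only when $n_1$ is odd) is exactly right.

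The only place a reader might want more detail is the final case check. I verified a representative sample: when $q_1=0$ both sub-cases $p_2=0$ and $p_2=\tilde a p_1$ give $\tilde a=a$ directly from the second coefficient equation and $\det M=\pm1$; in the exotic cases $(a,q_1)\in\{(2,\mp1),(1,\mp2)\}$ the second equation factors (e.g.\ as $(2q_2+\tilde a)(q_2+\tilde a)=0$ or $(q_2+\tilde a)(q_2+2\tilde a)=0$), and combining with $\det M=\pm1$ again yields $\tilde a=a$. So the bookkeeping you flag as ``short'' really is short. If you want to avoid it entirely, you could instead invoke Lemma~\ref{lem:lem-CMS} (valid for $n_2\ge 2$) after trading the roles of base and fiber, but since here $n_2=1$ that route is not available and the finite check is the natural thing to do.
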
 

\begin{proof}
(1) is well-known and easy to prove. (2) is Proposition 5.2 in \cite{c-p-s12}. 
\end{proof}

In the cases treated in Propositions~\ref{prop:2-stage-trivial} and~\ref{prop:n1=1}, two-stage {\em Fano} generalized Bott manifolds are distinguished as varieties by their cohomology rings. However, this is not true in general. 

\begin{example}
Let $\B=\B(2,(1,1))$ and $\tilde \B=\B(2,(0,1))$. They are four-dimensional and Fano by \eqref{eq:Fano_2-stage} but not isomorphic as varieties. Indeed, they are ID 70 and ID 141 respectively in the classification list of smooth toric Fano varieties by {\O}bro (\cite{Oeb}), see also \cite[Table 6]{HKM22}. However, there is a graded ring isomorphism 
\[
H^{\ast}(\B;\Z)=\Z[x_{1},x_{2}]/\langle x_{1}^{3},x_{2}(x_{2}-x_{1})^2\rangle \xrightarrow{\varphi}
H^{\ast}(\tilde \B;\Z)=\Z[\tilde x_{1},\tilde x_{2}]/\langle \tilde x_{1}^{3}, \tilde x_{2}^2(\tilde x_{2}- \tilde x_{1})\rangle
\] 
defined by $\varphi(x_1)=\tilde x_1$ and $\varphi(x_2)=\tilde x_1- \tilde x_2$, {so that they are diffeomorphic to each other by~\cite[Theorem 6.1]{CMS-tams}. Note that $c_1(\B)=x_1+3x_2$ and $c_1(\tilde\B)=2\tilde x_1+3\tilde x_2$ and $\varphi$ is not $c_1$-preserving}.
\end{example}

To treat the case $n_2\ge 2$, we recall a lemma. 

\begin{lemma}[{\cite[Lemma 6.2]{CMS-tams}}]\label{lem:lem-CMS}
Assume that $n_2\geq 2$ and {let $(d_1,\dots,d_{n_2})$ be a nonzero integer vector}. If $(a x+b y)^{n_1+1}=0$ in $\Z[x,y]/\langle x^{n_1+1},y\prod_{i=1}^{n_2}(y+d_ix)\rangle$ for some integers $a$ and $b$, then we get $b=0$.
\end{lemma}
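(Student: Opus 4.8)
The plan is to analyze the structure of $\Z[x,y]/I$ where $I=\langle x^{n_1+1},\, y\prod_{i=1}^{n_2}(y+d_ix)\rangle$, working out a monomial basis in the two degree ranges where $(ax+by)^{n_1+1}$ lives. First I would observe that the second relation $y\prod_{i=1}^{n_2}(y+d_ix)=0$ has the form $y^{n_2+1}+\sum_{r=1}^{n_2}(\pm e_r(d_1,\dots,d_{n_2}))x^r y^{n_2+1-r}=0$, so it lets us rewrite any monomial $x^a y^b$ with $b\ge n_2+1$ in terms of monomials with lower $y$-degree. Combined with $x^{n_1+1}=0$, a basis for $H^{2k}$ in low degrees is $\{x^a y^b : a+b=k,\ a\le n_1,\ b\le n_2\}$, and the key point is that $x^{n_2}y^{n_1+1-n_2}$ (the "new" top monomial that appears after one application of the $y$-relation when $n_1+1>n_2$) is nonzero — indeed, when we reduce $(ax+by)^{n_1+1}$, the only way to kill the term $b^{n_1+1}y^{n_1+1}$ is through the $y$-relation, and tracking the coefficient of $x^{n_2}y^{n_1+1-n_2}$ after full reduction gives $b^{n_1+1}$ times a nonzero integer (essentially $(-1)^{n_2}e_{n_2}(d_1,\dots,d_{n_2})$ plus lower-order corrections, but more robustly one argues via a filtration/leading-term argument).

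The cleaner route, which I would actually pursue, is a leading-term (Gröbner-style) argument. Order monomials $x^ay^b$ by total degree and then by $y$-degree, so the leading term of the second relation is $y^{n_2+1}$. Then the standard monomials (those not divisible by $x^{n_1+1}$ or $y^{n_2+1}$) form a $\Z$-basis of the quotient; in particular $x^{n_1+1-n_2}\,y^{n_2}$ is a nonzero basis element since $n_1+1-n_2\le n_1$ (as $n_2\ge 1$) and $n_2\le n_2$. Now expand $(ax+by)^{n_1+1}=\sum_{j=0}^{n_1+1}\binom{n_1+1}{j}a^{n_1+1-j}b^j x^{n_1+1-j}y^j$ and reduce modulo the Gröbner basis: the top piece $b^{n_1+1}y^{n_1+1}$ reduces, via the relation $y^{n_2+1}=-\sum_{r\ge 1}(\pm e_r)x^ry^{n_2+1-r}$, to something whose unique lowest-$y$-degree surviving monomial of total degree $n_1+1$ is a nonzero integer multiple of $x^{n_1+1-n_2}y^{n_2}$; I would check that this multiple is $b^{n_1+1}\cdot(\pm e_{n_2}(d))$ plus contributions only from higher-$y$-degree terms $b^{n_1+1-j}x^j y^{n_1+1-j}$ with smaller $j$, which after reduction contribute to $x^{n_1+1-n_2}y^{n_2}$ a polynomial in $b$ of strictly smaller degree times similar symmetric-function coefficients. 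Setting the total coefficient of $x^{n_1+1-n_2}y^{n_2}$ to zero forces (by the nonvanishing of $e_{n_2}(d_1,\dots,d_{n_2})$ when that holds, or more carefully by a degree/leading-coefficient-in-$b$ argument) $b=0$.

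The main obstacle is controlling the reduction when $e_{n_2}(d_1,\dots,d_{n_2})=0$ — i.e., when some $d_i=0$ — since then the naive "read off the coefficient of $x^{n_1+1-n_2}y^{n_2}$" breaks. Here one must use that $(d_1,\dots,d_{n_2})$ is merely \emph{nonzero}, so some $e_r(d_1,\dots,d_{n_2})\ne 0$; letting $r_0$ be the smallest such $r$, the relation reads $y^{n_2+1}=-\sum_{r\ge r_0}(\pm e_r)x^ry^{n_2+1-r}$, and iterating it shows $y^{n_1+1}$ reduces to an expression whose term of smallest $y$-degree is a nonzero multiple of $x^{?}y^{?}$ determined by $r_0$; one then picks the appropriate basis monomial $x^ay^b$ (with $b$ as small as the relation permits) and extracts the coefficient there. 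I expect the bookkeeping of "which standard monomial to test, as a function of $r_0$" to be the delicate step; an alternative that sidesteps it is to pass to the quotient by the ideal generated additionally by all $d_i=0$ ones — but cleanest is probably to argue by induction on $n_2$ or to invoke that the element $\prod_{i=1}^{n_2}(1+d_ix)$ is not a perfect $(n_2+1)$-st power unless it is $1$, reducing Lemma~\ref{lem:lem-CMS} to the elementary symmetric polynomial fact recalled just before Lemma~\ref{lemm:n_1>n_2}.

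Concretely, the step order I would follow is: (i) normalize the second relation into the form $y^{n_2+1}=\sum_{r=1}^{n_2}c_r x^r y^{n_2+1-r}$ with $c_r=-(-1)^r e_r(d_1,\dots,d_{n_2})$; (ii) set up the monomial basis of the quotient in total degrees up to $n_1+1$; (iii) expand $(ax+by)^{n_1+1}$ and reduce, isolating the coefficient, as a polynomial in $a,b$, of a well-chosen standard monomial of total degree $n_1+1$; (iv) argue that the leading-in-$b$ part of that coefficient is a nonzero integer times $b^{n_1+1}$ (using nonvanishing of the relevant $e_r$ and $n_1+1>n_2$), so that vanishing forces $b=0$. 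I would expect to be able to cite \cite[Lemma 6.2]{CMS-tams} verbatim rather than reprove it, but the self-contained argument above is the one I would write if a proof were required.
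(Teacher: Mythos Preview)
The paper does not give a proof of this lemma; it is quoted verbatim from \cite[Lemma~6.2]{CMS-tams} and the paper moves directly to Proposition~\ref{thm:stage-2}. So your closing suggestion---to cite the reference rather than reprove it---is exactly what the paper does.

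Regarding your self-contained sketch, there is a genuine gap. First, a minor inconsistency: you call $x^{n_1+1-n_2}y^{n_2}$ the ``lowest-$y$-degree surviving monomial,'' but among standard monomials of total degree $n_1+1$ (when $n_2\le n_1$) it is the one of \emph{highest} $y$-degree. More importantly, your step~(iv) asserts that the contribution of $b^{n_1+1}y^{n_1+1}$ to the coefficient of that monomial is $(\pm e_{n_2}(d))\,b^{n_1+1}$. This is already wrong for $n_1=n_2$: a single application of the relation gives $y^{n_2+1}=-\sum_{r\ge 1}e_r(d)\,x^r y^{n_2+1-r}$, so the coefficient of $x\,y^{n_2}$ is $-e_1(d)$, not $\pm e_{n_2}(d)$. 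And $e_1(d)=\sum_i d_i$ can vanish even though $(d_1,\dots,d_{n_2})\neq 0$. Concretely, take $n_1=n_2=2$, $d_1=1$, $d_2=-1$; then
\[
(ax+by)^3\equiv 3ab^2\,xy^2+(3a^2b+b^3)\,x^2y,
\]
and the coefficient of your target monomial $xy^2$ is $3ab^2$, which by itself does not force $b=0$. One must use the \emph{system} of coefficients, not a single one, and you do not carry that out. Your proposed patches (use the minimal $r_0$ with $e_{r_0}\neq 0$; induct on $n_2$; invoke the perfect-$(n_2{+}1)$st-power remark) are left as hints, and for the last one you do not explain the connection to the lemma. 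Finally, step~(iv) tacitly assumes $n_1+1>n_2$, which is not among the hypotheses (only $n_2\ge 2$ is). So if a proof were required here, the outline as written would not yet be one.
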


Below is our main result in this section.

\begin{proposition}\label{thm:stage-2}
Let $\B=\B(n_1,(a_1,\dots,a_{n_2}))$ and $\tilde \B =\B(n_1,(\tilde a_1,\dots,\tilde a_{n_2}))$. If $\B$ and $\tilde \B$ are Fano and there is a $c_1$-preserving isomorphism between $H^\ast(\B;\Z)$ and $H^\ast(\tilde \B;\Z)$ as graded rings, then $\B$ and $\tilde \B$ are isomorphic as varieties.
\end{proposition}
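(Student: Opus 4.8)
Recall that if the multisets $\{a_1,\dots,a_{n_2}\}$ and $\{\tilde a_1,\dots,\tilde a_{n_2}\}$ coincide, then $\B$ and $\tilde\B$ are isomorphic as varieties; so the plan is to prove that a $c_1$-preserving isomorphism forces exactly this equality. First I would clear away the degenerate situations. If $n_2=1$ the assertion is immediate from Proposition~\ref{prop:n1=1}: when $n_1>1$ the cohomology rings already determine $a$, and when $n_1=1$ the Fano condition restricts $a,\tilde a$ to $\{0,1\}$, on which $a\equiv\tilde a\pmod 2$ gives $a=\tilde a$. If $n_2\ge 2$ and one of $H^*(\B;\Z),H^*(\tilde\B;\Z)$ is isomorphic to $H^*(\CP^{n_1}\times\CP^{n_2};\Z)$ --- in particular if all the $a_k$, or all the $\tilde a_k$, vanish --- then Proposition~\ref{prop:2-stage-trivial}, applicable because $\B$ and $\tilde\B$ are Fano, forces both of them to be $\CP^{n_1}\times\CP^{n_2}$. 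Hence I may assume $n_2\ge 2$ and that neither $(a_1,\dots,a_{n_2})$ nor $(\tilde a_1,\dots,\tilde a_{n_2})$ is the zero vector.

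Let $\varphi\colon H^*(\tilde\B;\Z)\to H^*(\B;\Z)$ be a $c_1$-preserving graded ring isomorphism and write $\varphi(\tilde x_1)=px_1+qx_2$, $\varphi(\tilde x_2)=rx_1+sx_2$ in the presentation~\eqref{eq:coh_2-stage}. From $\tilde x_1^{n_1+1}=0$ we get $(px_1+qx_2)^{n_1+1}=0$ in $H^*(\B;\Z)$, and since $n_2\ge 2$ and $(-a_1,\dots,-a_{n_2})\neq\mathbf{0}$, Lemma~\ref{lem:lem-CMS} gives $q=0$; applying the same argument to $\varphi^{-1}$ gives $\varphi^{-1}(x_1)\in\Z\,\tilde x_1$, so $p\in\{\pm1\}$ and hence $s\in\{\pm1\}$. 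The $c_1$-preserving condition $\varphi(c_1(\tilde\B))=c_1(\B)$ together with~\eqref{eq:c1_2-stage} yields, on comparing the coefficients of $x_2$ and of $x_1$,
\[
s=1,\qquad p\,\tilde A_1+(n_2+1)\,r=A_1,
\]
where $A_1:=n_1+1-\sum_k a_k$ and $\tilde A_1:=n_1+1-\sum_k\tilde a_k$ both lie in $\{1,\dots,n_1\}$ by the Fano condition and the nonvanishing assumption.

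Next I would use the remaining defining relation. Write $H^*(\B;\Z)=\Z[X_1,X_2]/I$ with $I=\langle X_1^{n_1+1},\ \prod_{j=0}^{n_2}(X_2-\beta_jX_1)\rangle$, where $\beta_0=0$ and $\beta_k=a_k$ for $1\le k\le n_2$. The relation $\tilde x_2\prod_k(\tilde x_2-\tilde a_k\tilde x_1)=0$ in $H^*(\tilde\B;\Z)$ then says that $\prod_{j=0}^{n_2}(X_2-\gamma_jX_1)\in I$, where $\gamma_0=-r$ and $\gamma_k=p\tilde a_k-r$. Since this element and the two generators of $I$ are homogeneous of degrees $n_2+1$, $n_1+1$, $n_2+1$, we can write $\prod_j(X_2-\gamma_jX_1)=h\,X_1^{n_1+1}+c\prod_j(X_2-\beta_jX_1)$ with $h$ homogeneous of degree $n_2-n_1$ (so $h=0$ when $n_2<n_1$) and $c\in\Z$, and comparing the coefficient of $X_2^{n_2+1}$ forces $c=1$. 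This gives the key identity
\begin{equation*}
\prod_{j=0}^{n_2}(X_2-\gamma_jX_1)-\prod_{j=0}^{n_2}(X_2-\beta_jX_1)=h\,X_1^{n_1+1}.\tag{$\star$}
\end{equation*}

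It remains to run the case analysis on $(\star)$ (below $e_i(a)$ abbreviates $e_i(a_1,\dots,a_{n_2})$). If $n_2<n_1$, then $h=0$, so $\{\gamma_j\}=\{\beta_j\}$ as multisets; here $p=-1$ is impossible, since then the maximum of $\{-r\}\cup\{-\tilde a_k-r\}_k$ equals $-r$, forcing $r\le 0$, whereas the displayed $c_1$-equation reads $(n_2+1)r=A_1+\tilde A_1\ge 2$; so $p=1$, $\{0,\tilde a_1,\dots,\tilde a_{n_2}\}-r=\{0,a_1,\dots,a_{n_2}\}$, and comparing minima (both $0$, since all $a_k,\tilde a_k\ge0$) gives $r=0$ and $\{a_k\}=\{\tilde a_k\}$. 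If $n_2\ge n_1$ and $p=1$, then $(n_2+1)|r|=|A_1-\tilde A_1|\le n_1-1<n_2+1$ forces $r=0$, and $(\star)$ becomes $X_2\sum_{i\ge1}(-1)^i\big(e_i(\tilde a)-e_i(a)\big)X_1^iX_2^{n_2-i}=h\,X_1^{n_1+1}$; matching monomials gives $e_i(\tilde a)=e_i(a)$ for $1\le i\le n_1$, while for $i>n_1$ the Fano condition (each of $(a_k),(\tilde a_k)$ has at most $n_1$ nonzero entries) makes both sides vanish, so $e_i(\tilde a)=e_i(a)$ for all $i$ and $\{a_k\}=\{\tilde a_k\}$. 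Finally, if $n_2\ge n_1$ and $p=-1$, comparing the coefficient of $X_1X_2^{n_2}$ in $(\star)$ (the right-hand side has none, as $n_1\ge1$) gives $\sum_j\gamma_j=\sum_k a_k$, i.e. $(n_2+1)r=-\sum_k a_k-\sum_k\tilde a_k\le0$, contradicting $(n_2+1)r=A_1+\tilde A_1\ge2$. In every surviving case $\{a_k\}=\{\tilde a_k\}$, hence $\B$ and $\tilde\B$ are isomorphic as varieties.

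The step I expect to be the crux is making all of this airtight simultaneously: Lemma~\ref{lem:lem-CMS} pins down $\varphi$ on the base class, the $c_1$-preserving equation is precisely what excludes the ``dual'' sign $p=-1$ and a nonzero shift $r$, and the Fano inequalities are what control the elementary symmetric functions $e_i$ past degree $n_1$. The regime $n_2\ge n_1$ is the delicate one, because there the degree-$(n_2+1)$ part of the relation ideal is no longer of rank one and the correction term $h\,X_1^{n_1+1}$ in $(\star)$ genuinely contributes, so one must argue that it is forced to vanish rather than discard it.
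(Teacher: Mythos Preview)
Your proof is correct and follows essentially the same route as the paper's: reduce via Propositions~\ref{prop:n1=1} and~\ref{prop:2-stage-trivial}, use Lemma~\ref{lem:lem-CMS} and the $c_1$-condition to pin down $\varphi$, derive the identity $(\star)$, and split on $n_2<n_1$ versus $n_2\ge n_1$, using the Fano bound in the latter case to control the elementary symmetric functions. The only cosmetic difference is that the paper eliminates the sign $p=-1$ uniformly before the case split (by subtracting the $X_1X_2^{n_2}$-coefficient identity of $(\star)$ from the $c_1$-equation to get $\epsilon_1(n_1+1)=n_1+1$), whereas you rule out $p=-1$ separately inside each case; in Case~1 the paper then concludes $r=0$ by unique factorization rather than by your min/max comparison of the root multisets.
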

\begin{proof}
{When $n_2=1$, the theorem follows from Proposition~\ref{prop:n1=1} (in this case, the $c_1$-preserving condition is unnecessary). So, we assume $n_2\ge 2$. Moreover, we may assume that both vectors $(a_1,\dots,a_{n_2})$ and $(\tilde a_1,\dots,\tilde a_{n_2})$ are nonzero by Proposition~\ref{prop:2-stage-trivial}.} We may further assume \eqref{eq:ai_nonnegative} and \eqref{eq:Fano_2-stage} for $a_k$'s and $\tilde{a}_k$'s. Under this situation, we prove $\{a_1,\dots,a_{n_2}\}=\{\tilde a_1,\dots,\tilde{a}_{n_2}\}$ as multisets, which means that $\B$ and $\tilde{\B}$ are isomorphic as varieties. 

We denote by $\tilde{x}_i$ the element in $H^2(\tilde{\B};\Z)$ corresponding to $x_i$ for $i=1,2$. Then $H^*(\tilde{\B};\Z)$ and $c_1(\tilde{\B})$ have the presentation \eqref{eq:coh_2-stage} and \eqref{eq:c1_2-stage} with tilde. 

Let $\varphi\colon H^{\ast}(\tilde\B;\Z) \to H^{\ast}(\B;\Z)$ be a $c_1$-preserving graded ring isomorphism. Since $\varphi(\tilde{x}_1)^{n_1+1}=\varphi(\tilde{x}_1^{n_1+1})=0$ in $H^{\ast}(\B;\Z)$, it follows from Lemma~\ref{lem:lem-CMS} that we have
\begin{equation} \label{eq:varphi_2-stage}
\text{$\varphi(\tilde{x}_1)=\epsilon_{1} x_1$\quad and\quad $\varphi(\tilde x_2)=px_1+\epsilon_{2} x_2$}
\end{equation}
for some integer $p$, where $\epsilon_{1}$ and $\epsilon_{2}$ are $\pm1$ because $\varphi$ is an isomorphism.
Therefore, 
\begin{equation*}
\varphi \left(\tilde x_{2}\prod_{k=1}^{n_2}(\tilde x_{2}-\tilde a_{k}\tilde x_{1}) \right)=(\epsilon_{2} x_{2}+p x_{1})\prod_{k=1}^{n_2}(\epsilon_{2} x_{2}+(p-\tilde a_{k}\epsilon_{1}) x_{1}). 
\end{equation*}
The right hand side above vanishes in $H^{\ast}(\B;\Z)$ because so does the left hand side above by \eqref{eq:coh_2-stage} for $\tilde\B$. It follows from \eqref{eq:coh_2-stage} that there exist a homogeneous polynomial $f(x_{1}, x_{2})$ of degree $n_2-n_1$ when $n_2\ge n_1$ ($f(x_{1},x_{2})=0$ otherwise) and an integer $q$ such that 
\begin{equation} \label{eq-rel}
(\epsilon_{2}x_{2}+p x_{1})\prod_{k=1}^{n_2}(\epsilon_{2}x_{2}+(p-\tilde a_{k}\epsilon_{1})x_{1}) \\
=f(x_{1},x_{2}) x_{1}^{n_1+1}+q x_{2}\prod_{k=1}^{n_2}(x_{2}- a_{k} x_{1})
\end{equation}
as polynomials in ${x}_1$ and ${x}_2$. Comparing the coefficients of $x_{2}^{n_2+1}$ on both sides above, we get
\begin{equation}\label{eq-1}
\epsilon_{2}^{n_2+1}=q.
\end{equation}

On the other hand, since $\varphi$ is $c_{1}$-preserving, it follows from \eqref{eq:c1_2-stage} (for $\B$ and $\tilde\B$) and \eqref{eq:varphi_2-stage} that 
\begin{equation*}
\epsilon_{1}\left(n_1+1-\sum_{k=1}^{n_2}\tilde a_{k}\right) x_{1}+(n_2+1)(p x_{1}+\epsilon_{2} x_{2})=\left((n_1+1)-\sum_{k=1}^{n_2} a_{k}\right) x_{1}+(n_2+1)x_{2}.
\end{equation*}
Comparing the coefficients of ${x}_2$ on both sides above, we get $\epsilon_{2}=1$; so $q=1$ by~\eqref{eq-1} and the identity above reduces to 
\begin{equation}\label{eq-3}
\epsilon_{1}\left(n_1+1-\sum_{k=1}^{n_2}\tilde a_{k}\right)+(n_2+1)p=(n_1+1)-\sum_{k=1}^{n_2} a_{k}. 
\end{equation}
Moreover, comparing the coefficients of $ x_{1}x_{2}^{n_2}$ on both sides of~\eqref{eq-rel} with $\epsilon_2=q = 1$, we get 
\begin{equation}\label{eq-2-2}
(n_2+1)p-\epsilon_{1}\sum_{k=1}^{n_2}\tilde a_{k}=-\sum_{k=1}^{n_2} a_{k}.
\end{equation}
By substituting~\eqref{eq-2-2} into~\eqref{eq-3}, we get $\epsilon_1(n_1+1)=n_1+1$. Therefore, $\epsilon_{1}=1$ and hence \eqref{eq-2-2} reduces to 
\begin{equation} \label{eq:n_2}
(n_2+1)p=\sum_{k=1}^{n_2}\tilde a_{k}-\sum_{k=1}^{n_2} a_{k}.
\end{equation} 

\underline{\textbf{Case 1:} $n_2<n_1$.} In this case, we have $f(x_1,x_2)=0$, so \eqref{eq-rel} with $\epsilon_1=\epsilon_2=q=1$ becomes 
\begin{equation}\label{eq_factorization}
(x_2+px_1)\prod_{k=1}^{n_2}(x_2+(p-\tilde a_k)x_1)=x_2\prod_{k=1}^{n_2}(x_2- a_k x_1)
\end{equation} 
as polynomials in ${x}_1$ and ${x}_2$. Hence 
\begin{equation}\label{eq_p_condition}
p=0 \quad \text{ or } \quad p=\tilde a_{k_0} \quad\text{for some $1\leq k_0\leq n_2$ with $\tilde a_{k_0}>0$}. 
\end{equation}

Suppose that the latter case in \eqref{eq_p_condition} occurs. Then it follows from \eqref{eq_factorization} that $p=- a_{k_1}$ for some $k_1$ ($1\leq k_1\leq n_2$) but this is a contradiction because $\tilde a_{k_0}>0$ while $-{a}_{k_1}\le 0$ by \eqref{eq:ai_nonnegative}. Therefore, the former case in~\eqref{eq_p_condition} must occur, i.e. $p=0$. By substituting $p = 0$ in~\eqref{eq_factorization}, we get
\[
x_2 \prod_{k=1}^{n_2} (x_2 - \tilde a_k x_1)= x_2 \prod_{k=1}^{n_2} (x_2 - a_k x_1).
\]
Therefore, we obtain $\{\tilde a_1,\dots,\tilde{a}_{n_2}\}=\{a_1,\dots,a_{n_2}\}$ as multisets. 

\underline{\textbf{Case 2:} $n_2\ge n_1$.} It follows from ~\eqref{eq:Fano_2-stage} (for $\B$ and $\tilde\B$) that 
\[
\left|\ \sum_{k=1}^{n_2}a_{k}-\sum_{k=1}^{n_2} \tilde a_{k}\ \right| \leq n_1<n_2+1.
\] 
This together with \eqref{eq:n_2} implies $p=0$, so \eqref{eq-rel} with $p=0$ and $\epsilon_{1}=\epsilon_{2}=q=1$ becomes
\begin{equation*}
x_{2}\prod_{k=1}^{n_2}( x_{2}-\tilde a_{k} x_{1}) =f(x_{1},x_{2})x_{1}^{n_1+1}+x_{2}\prod_{k=1}^{n_2}(x_{2}- a_{k}x_{1})
\end{equation*}
as polynomials in ${x}_1$ and ${x}_2$. Comparing the coefficients of $x_{1}x_{2}^{n_2},x_{1}^{2}x_{2}^{n_2-1},\dots,x_{1}^{n_1}x_{2}^{n_2-n_1+1}$ on both sides above, we get
\begin{equation} \label{eq:a&tildea}
e_{k}( \tilde a_{1},\dots,\tilde a_{n_2})=e_{k}(a_{1},\dots,a_{n_2})\qquad\text{for }k=1,\dots,n_1.
\end{equation}
Since $a_k$'s are nonnegative integers by \eqref{eq:ai_nonnegative} and $\sum_{k=1}^{n_2}a_k\le n_1$ by \eqref{eq:Fano_2-stage}, at most $n_1$ elements in $\{a_1,\dots,a_{n_2}\}$ are nonzero. The same is true for $\tilde{a}_k$'s. Therefore, we get $\{\tilde a_1,\dots,\tilde{a}_{n_2}\}=\{a_1,\dots,a_{n_2}\}$ as multisets by \eqref{eq:a&tildea}. 
\end{proof}

Now we are ready to prove Theorem~\ref{thm_rank_2}.

\begin{proof}[Proof of Theorem~\ref{thm_rank_2}]
Any smooth compact toric variety of Picard number two is a two-stage generalized Bott manifold (\cite{klei88}).  So, suppose that $X$ and $Y$ are two-stage Fano generalized Bott manifolds and there is a $c_1$-preserving graded ring isomorphism between their integral cohomology rings.  By Lemma~\ref{lemm:fiber_dimension} and Proposition~\ref{prop:2-stage-trivial}, we may assume that $X$ and $Y$ are associated with generalized Bott matrices of the same type. Then $X$ and $Y$ are isomorphic as varieties by Proposition~\ref{thm:stage-2}. 
\end{proof}

\section{Related cohomological rigidity} \label{sec:concluding_remarks}

In this section, we overview related cohomological rigidity problems and results. 
All cohomology groups are taken with $\Z$ coefficients unless otherwise stated. 

\subsection{Equivariant cohomology and equivariant first Chern class.}\label{subsection_equivariant_cohomologies} Let $X$ be a smooth compact toric variety and $\T$ the algebraic torus acting on $X$. The equivariant cohomology of $X$ is defined as 
\[
H^*_{\T}(X):=H^*(E{\T}\times_{\T} X)
\]
where $E\T\to B\T$ is the universal principal $\T$-bundle and $E{\T}\times_{\T} X$ denotes the orbit space of $E{\T}\times X$ by the diagonal $\T$-action. 
The equivariant cohomology $H^*_{\T}(X)$ is not only a ring but also an algebra over $H^*(B\T)$ through the projection $E{\T}\times_{\T} X\to B\T$. 

The group ${\rm Aut}(X)$ of all automorphisms of $X$ is known to be an algebraic group of finite dimension and the algebraic torus $\T$ acting on $X$ determines a maximal torus of ${\rm Aut}(X)$ (see \cite[Section 3.4]{oda88}). This implies that if smooth compact toric varieties $X$ and $Y$ are isomorphic as varieties, then they are isomorphic as toric varieties up to an automorphism of $\T$, that is, there is an isomorphism $f\colon X\to Y$ together with a group automorphism $\sigma$ of $\mathbb{T}$ such that $f(gx)=\sigma(g)f(x)$ for $g\in \T$ and $x\in X$. Therefore, if $X$ and $Y$ are isomorphic as varieties, then $H^*_\T(X)$ and $H^*_\T(Y)$ are weakly isomorphic as algebras over $H^*(B\T)$, which means that there is a ring isomorphism $\Phi\colon H^*_\T(Y)\to H^*_\T(X)$ together with an automorphism $\sigma$ of $\T$ such that $\Phi(u\alpha)=\sigma^*(u)\Phi(\alpha)$ for any $u\in H^*(B\T)$ and $\alpha\in H^*_\T(Y)$, where $\sigma^*$ denotes the automorphism of $H^*(B\T)$ induced from~$\sigma$. Moreover, the ring isomorphism $\Phi$ induced from the variety isomorphism $f$ preserves the equivariant first Chern classes of $X$ and $Y$. 

It turns out that the converse holds, namely if there is a weak $H^*(B\T)$-algebra isomorphism $\Phi\colon H^*_\T(Y)\to H^*_\T(X)$ preserving the equivariant first Chern classes of $X$ and $Y$, then $X$ and $Y$ are isomorphic as varieties. (Note. It is pointed out in \cite[Remark 2.5]{HKM22} that the condition preserving the equivariant first Chern classes is necessary for \cite[Theorem 1.1]{Ma08}.) Such $\Phi$ induces an isomorphism $\varphi$ between $H^*(X)$ and $H^*(Y)$ preserving the first Chern classes of $X$ and $Y$. Therefore, Conjecture~1.1 suggests that it might be possible to recover $\Phi$ from $\varphi$ for smooth toric Fano varieties. 

\subsection{Cohomological rigidity over a commutative ring $\Lambda$} The cohomological rigidity problem posed in \cite{ma-su08} asks whether smooth compact toric varieties are diffeomorphic (or homeomorphic) if they have isomorphic integral cohomology rings. Many partial positive results are known, but no counterexample is known. To state known results for the cohomological rigidity problem, it is convenient to fix a family $\mcal{M}$ of smooth manifolds and say that $\mcal{M}$ is {\em cohomologically rigid} if any two objects in $\mcal{M}$ are distinguished up to diffeomorphism (or homeomorphism) by their integral cohomology rings. The family of $2$-stage generalized Bott manifolds is cohomologically rigid (\cite{CMS-tams}). However, it is not known whether the family of generalized Bott manifolds is cohomologically rigid. A recent notable achievement is that the family of Bott manifolds is cohomologically rigid (\cite{c-h-j}). See \cite{ch-pa16, c-p-s12, HKMP20} for further results. 

For a real analogue of smooth compact toric manifolds, 
such as real loci of compact smooth toric varieties or small covers introduced by Davis--Januszkiewicz (see \cite{BP_toric_topology, DavisJanuszkiewicz91}),  
it is natural to take cohomology rings with $\Z/2\Z$-coefficients. We say that a family $\mcal{M}$ of smooth manifolds is \emph{cohomologically rigid over a commutative ring $\Lambda$} if any two objects in $\mcal{M}$ are distinguished up to diffeomorphic (or homeomorphic) by their cohomology rings with $\Lambda$-coefficients. It is known that the family of real Bott manifolds is cohomologically rigid over $\Z/2\Z$ (\cite{c-m-o17, ka-ma09}). Real Bott manifolds are real loci of Bott manifolds and provide examples of Riemannian flat manifolds. Similarly, the family of hyperbolic $3$-manifolds of L\"obel type, which are small covers over $3$-dimensional right-angled hyperbolic polytopes, is also cohomologically rigid over $\Z/2\Z$ (\cite{b-e-m-p-p17}). However, the family of $2$-stage real generalized Bott manifolds is not cohomologically rigid over $\Z/2\Z$ (\cite{masu10}) although the family of $2$-stage generalized Bott manifolds is cohomologically rigid over $\Z$ as is mentioned above. 

\subsection{Cohomological super-rigidity} One can also think of an algebraic version of the cohomological rigidity. Following \cite{HKMP20}, we may say that a family $\mcal{V}$ of smooth algebraic varieties is {\em cohomologically super-rigid} if any objects in~$\mcal{V}$ are distinguished up to isomorphism by their integral cohomology rings. Propositions~\ref{prop:2-stage-trivial} and~\ref{prop:n1=1} are results of this type, see also \cite{HKMP20} for results of this type.

\bibliographystyle{amsplain}

\end{document}